\let\origsection=\section \def\section{\@ifstar{\origsection*}{\mysection}}
\def\mysection{\@startsection{section}{1}\z@{.7\linespacing\@plus\linespacing}{.5\linespacing}{\normalfont\scshape\centering\S}}
\renewcommand{\PrintDOI}[1]{\doi{#1}}
\newcommand\rmlabel{\upshape({\itshape\roman*\,\/})}
\newcommand\alabel{\upshape({\itshape\alph*\,\/})}
\let\polishlcross=\l
\def\l{\ifmmode\ell\else\polishlcross\fi}
\newcommand\qand{\quad\text{and}\quad}
\renewcommand{\emptyset}{\varnothing}
\renewcommand{\setminus}{\smallsetminus}
\def\moverlay{\mathpalette\mov@rlay}
\def\mov@rlay#1#2{\leavevmode\vtop{   \baselineskip\z@skip \lineskiplimit-\maxdimen
   \ialign{\hfil$\m@th#1##$\hfil\cr#2\crcr}}}
\newcommand{\charfusion}[3][\mathord]{
    #1{\ifx#1\mathop\vphantom{#2}\fi
        \mathpalette\mov@rlay{#2\cr#3}
      }
    \ifx#1\mathop\expandafter\displaylimits\fi}
\newcommand{\dcup}{\charfusion[\mathbin]{\cup}{\cdot}}
\newtheorem{theorem}{Theorem}
\newtheorem{lemma}[theorem]{Lemma}
\newtheorem{corollary}[theorem]{Corollary}
\newcommand\ccb{{\mathcal{B}}}
\newcommand\ccc{{\mathcal{C}}}
\newcommand\cch{{\mathcal{H}}}
\newcommand\ccp{{\mathcal{P}}}
\newcommand\ccq{{\mathcal{Q}}}
\newcommand\ccx{{\mathcal{X}}}
\newcommand\bbn{{\mathds{N}}}
\renewcommand{\phi}{\varphi}
\renewcommand{\rho}{\varrho}
\renewcommand{\theta}{\vartheta}
\newcommand{\eps}{\varepsilon}
\newcommand{\epsprime}{\eps'}
\newcommand{\Aeps}{A_{\eps}}
\newcommand{\Aast}{A_{\ast}}
\newcommand{\aeps}{a_{\eps}}
\newcommand{\Beps}{B_{\eps}}
\newcommand{\Bast}{B_{\ast}}
\newcommand{\beps}{b_{\eps}}
\newcommand{\Veps}{V_{\eps}}
\newcommand{\veps}{v_{\eps}}
\begin{document}

\title[Loose Hamiltonian cycles forced by $(k-2)$-degree]{Loose Hamiltonian cycles forced by large $(k-2)$-degree \\ -- sharp version --}

\author[J.~de~O.~Bastos]{Josefran de Oliveira Bastos}
\author[G.~O.~Mota]{Guilherme Oliveira Mota}
\address{Instituto de Matem\'atica e Estat\'{\i}stica, Universidade de
   S\~ao Paulo, S\~ao Paulo, Brazil}
\email{\{josefran|mota\}@ime.usp.br}

\author[M.~Schacht]{Mathias Schacht}
\author[J.~Schnitzer]{Jakob Schnitzer}
\author[F.~Schulenburg]{Fabian Schulenburg}
\address{Fachbereich Mathematik, Universit\"at Hamburg, Hamburg, Germany}
\email{schacht@math.uni-hamburg.de}
\email{\{jakob.schnitzer|fabian.schulenburg\}@uni-hamburg.de}

\thanks{The first author was supported by CAPES\@.
The second author was supported by FAPESP (Proc. 2013/11431-2 and 2013/20733-2) and CNPq (Proc. 477203/2012-4 and {456792/2014-7}).
The cooperation was supported by a joint CAPES/DAAD PROBRAL (Proc. 430/15).}

\begin{abstract}
We prove for all $k\geq 4$ and $1\leq\l<k/2$ the sharp minimum $(k-2)$-degree bound for a $k$-uniform hypergraph~$\cch$ on~$n$ vertices to contain a Hamiltonian $\l$-cycle if $k-\l$ divides~$n$ and~$n$ is sufficiently large.
This extends a result of Han and Zhao for $3$-uniform hypegraphs.
\end{abstract}

\keywords{hypergraphs, Hamiltonian cycles, degree conditions}
\subjclass[2010]{05C65 (primary), 05C45 (secondary)}

\maketitle

\section{Introduction}

Given $k\geq 2$, a $k$-uniform hypergraph $\cch$ is a pair $(V,E)$ with vertex set $V$ and edge set $E\subseteq V^{(k)}$, where $V^{(k)}$ denotes the set of all $k$-element subsets of $V$.
Given a $k$-uniform hypergraph $\cch=(V,E)$ and a subset $S \in V^{(s)}$, we denote by $d(S)$ the number of edges in $E$ containing~$S$ and we denote by $N(S)$ the $(k-s)$-element sets $T\in V^{(k-s)}$ such that $T\dcup S\in E$, so $d(S)=|N(S)|$.
The \emph{minimum $s$-degree} of $\cch$ is denoted by $\delta_s(\cch)$ and it is defined as the minimum of $d(S)$ over all sets $S\in V^{(s)}$.
We denote by the \textit{size} of a hypergraph the number of its edges.

We say that a $k$-uniform hypergraph $\ccc$ is an \emph{$\l$-cycle} if there exists a cyclic ordering of its vertices such that every edge of $\ccc$ is composed of $k$ consecutive vertices, two (vertex-wise) consecutive edges share exactly $\l$ vertices, and every vertex is contained in an edge.
Moreover, if the ordering is not cyclic, then $\ccc$ is an \emph{$\l$-path} and we say that the first and last~$\l$ vertices are the ends of the path.
The problem of finding minimum degree conditions that ensure the existence of Hamiltonian cycles, i.e.\ cycles that contain all vertices of a given hypergraph, has been extensively studied over the last years (see, e.g., the surveys~\cites{RRsurv,Zhao-survey}).
Katona and Kierstead~\cite{KaKi99} started the study of this problem, posing a conjecture that was confirmed by R\"odl, Ruci\'nski, and Szemer\'edi~\cites{RoRuSz06,RoRuSz08}, who proved the following result:
For every $k\geq 3$, if $\cch$ is a $k$-uniform $n$-vertex hypergraph with $\delta_{k-1}(\cch)\geq {(1/2+o(1))}n$, then $\cch$ contains a Hamiltonian $(k-1)$-cycle.
K\"uhn and Osthus proved that $3$-uniform hypergraphs~$\cch$ with $\delta_2(\cch)\geq {(1/4 +o(1))}n$ contain a Hamiltonian $1$-cycle~\cite{KuOs06}, and H\`an and Schacht~\cite{HaSc10} (see also~\cite{KeKuMyOs11}) generalized this result to arbitrary $k$ and $\l$-cycles with $1\leq \l <k/2$.
In~\cite{KuMyOs10}, K\"uhn, Mycroft, and Osthus generalized this result to $1\leq \l<k$, settling the problem of the existence of Hamiltonian $\l$-cycles in $k$-uniform hypergraphs with large minimum $(k-1)$-degree.
In Theorem~\ref{theorem:asymp} below (see~\cites{BuHaSc13,BaMoScScSc16+}) we have minimum $(k-2)$-degree conditions that ensure the existence of Hamiltonian $\l$-cycles for $1\leq \l<k/2$.

\begin{theorem}\label{theorem:asymp}
For all integers $k\geq 3$ and $1\leq \l<k/2$ and every $\gamma>0$ there exists an $n_0$ such that every $k$-uniform hypergraph $\cch=(V,E)$ on $|V|=n\geq n_0$ vertices with $n\in(k-\l)\bbn$ and
\begin{equation*}
    \delta_{k-2}(\cch)\geq\left(\frac{4(k-\l)-1}{4{(k-\l)}^2}+\gamma\right)\binom{n}{2}
\end{equation*}
contains a Hamiltonian $\l$-cycle.
\qed
\end{theorem}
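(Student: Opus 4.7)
The plan is to use the absorbing method of R\"odl, Ruci\'nski, and Szemer\'edi, adapted to $(k-2)$-degree hypotheses. Fix $k\geq 3$, $1\leq\l<k/2$, and $\gamma>0$; choose auxiliary parameters $\eps\ll\eta\ll\gamma$ and let $n$ be large. The argument combines four ingredients --- an \emph{absorbing lemma}, a \emph{reservoir lemma}, a \emph{path cover lemma}, and a \emph{connecting lemma} --- which I describe in the order I would use them to build the cycle.

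First, for the absorbing lemma, I would show that every $(k-\l)$-set $X\in V^{(k-\l)}$ admits $\Omega(n^{C})$ short $\l$-paths $A_X$ on $C=C(k,\l)$ vertices with fixed ends $e_1,e_2\in V^{(\l)}$ such that both $V(A_X)$ and $V(A_X)\cup X$ span an $\l$-path from $e_1$ to $e_2$; the $(k-2)$-degree hypothesis gives $\Omega(n)$ extensions of any $(k-2)$-tuple, so $A_X$ can be built greedily. A random selection of absorbers together with a connecting step then yields a single $\l$-path $P_A$ of length at most $\eta n$ with the property that, for every $U\subseteq V\setminus V(P_A)$ with $|U|\leq\eps n$ and $|U|\in(k-\l)\bbn$, there is an $\l$-path on $V(P_A)\cup U$ having the same ends as $P_A$. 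In parallel, I take a random reservoir $R\subseteq V\setminus V(P_A)$ of size $\Theta(\eta n)$: with high probability $R$ inherits a $(k-2)$-degree condition only slightly weaker than that of $\cch$, and any two $\l$-tuples lying in typical position can be joined by a short $\l$-path in $R$, even after deleting an $\eps$-fraction of its vertices.

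The central step is the path cover: on $V'=V\setminus(V(P_A)\cup R)$ I would build an $\l$-path covering all but at most $\eps n$ vertices. Applying the weak hypergraph regularity lemma to $\cch[V']$ and passing to the reduced hypergraph on the clusters, one sees that the density $\tfrac{4(k-\l)-1}{4(k-\l)^2}$ is calibrated precisely so that the inherited reduced $(k-2)$-degree bound forces a near-perfect fractional $\l$-tiling in the reduced hypergraph. Converting this into an almost-spanning $\l$-path uses a standard blow-up/walk argument through regular tuples, with short bridges drawn from $R$. A final invocation of the reservoir links the ends of this long path to the ends of $P_A$, forming an $\l$-cycle that misses a set $U$ of at most $\eps n$ vertices; divisibility $n\in(k-\l)\bbn$ (maintained throughout) together with the absorbing property of $P_A$ then produces the Hamiltonian $\l$-cycle. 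The main obstacle is the path-cover step: showing that $\tfrac{4(k-\l)-1}{4(k-\l)^2}$ is exactly the threshold forcing the required fractional $\l$-tiling on the reduced hypergraph, and ruling out extremal obstructions above it, is the one place where the particular form of the degree bound is used in a non-trivial way; the remaining steps are comparatively routine given enough slack in the parameters.
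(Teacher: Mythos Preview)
The paper does not prove Theorem~\ref{theorem:asymp}; it is quoted from~\cites{BuHaSc13,BaMoScScSc16+} and stated with a bare \qed. So there is no ``paper's own proof'' to compare against here---the present article only uses this asymptotic result as background and then proves the sharp version via the extremal/non-extremal split (Theorems~\ref{theorem:non-extremal} and~\ref{theorem:extremal}).

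That said, your outline is the right one and matches the strategy of the cited sources: absorbing path, reservoir, almost-spanning path cover via weak regularity, and connecting through the reservoir. Two remarks on accuracy. First, in the path-cover step the relevant structure in the reduced hypergraph is not quite a ``fractional $\l$-tiling'' but rather a large (fractional) matching, from which one builds long loose paths inside regular $k$-tuples; the point of the constant $\tfrac{4(k-\l)-1}{4(k-\l)^2}$ is that it guarantees an almost-perfect matching in this auxiliary setting. Second, your absorbing sketch (``the $(k-2)$-degree hypothesis gives $\Omega(n)$ extensions of any $(k-2)$-tuple, so $A_X$ can be built greedily'') hides real work: one must exhibit, for every $(k-\l)$-set $X$, $\Omega(n^{C})$ absorbers, and a naive greedy count from the codegree bound does not by itself give this---one needs a supersaturation/counting argument tailored to the loose-path gadget. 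These are exactly the places where~\cite{BaMoScScSc16+} spends its effort, so your identification of the path-cover step as the crux is correct, but the absorbing lemma is not entirely routine either.
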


The minimum degree condition in Theorem~\ref{theorem:asymp} is asymptotically optimal as the following well-known example confirms.
The construction of the example varies slightly depending on whether~$n$ is an odd or an even multiple of~$k-\l$.
We first consider the case that $n = (2m + 1)(k-\l)$ for some integer~$m$.
Let $\ccx_{k,\l}(n)=(V,E)$ be a $k$-uniform hypergraph on $n$ vertices such that an edge belongs to $E$ if and only if it contains at least one vertex from $A \subset V$, where $|A|=\left\lfloor \frac{n}{2(k-\l)} \right\rfloor$.
It is easy to see that $\ccx_{k,\l}(n)$ contains no Hamiltonian $\l$-cycle, as it would have to contain $\frac{n}{k-\l}$ edges and each vertex in~$A$ is contained in at most two of them.
Indeed any maximal $\l$-cycle includes all but $k-\l$ vertices and adding any additional edge to the hypergraph would imply a Hamiltonian $\l$-cycle.
Let us now consider the case that $n= 2m(k-\l)$ for some integer~$m$.
Similarly, let $\ccx_{k,\l}(n)=(V,E)$ be a $k$-uniform hypergraph on $n$ vertices that contains all edges incident to $A \subset V$, where $|A|=\frac{n}{2(k-\l)}-1$.
Additionally, fix some $\ell+1$ vertices of $B = V\setminus A$ and let $\ccx_{k,\l}(n)$ contain all edges on $B$ that contain all of these vertices, i.e., an $(\l+1)$-star.
Again, of the $\frac{n}{k-\l}$ edges that a Hamiltonian $\l$-cycle would have to contain, at most $\frac{n}{k-\l} - 2$ can be incident to $A$.
So two edges would have to be completely contained in $B$ and be disjoint or intersect in exactly $\l$ vertices, which is impossible since the induced subhypergraph on $B$ only contains an $(\l+1)$-star.
Note that for the minimum $(k-2)$-degree the $(\l+1)$-star on $B$ is only relevant if $\l=1$, in which case this star increases the minimum $(k-2)$-degree by one.

In~\cite{HaZh15b}, Han and Zhao proved the exact version of Theorem~\ref{theorem:asymp} when $k=3$, i.e., they obtained a sharp bound for $\delta_{1}(\cch)$.
We extend this result to $k$-uniform hypergraphs.

\begin{theorem}[Main Result]\label{theorem:main}
For all integers $k\geq 4$ and $1\leq \l<k/2$ there exists $n_0$ such that every $k$-uniform hypergraph $\cch=(V,E)$ on $|V|=n\geq n_0$ vertices with $n\in(k-\l)\bbn$ and
\begin{equation}\label{eq:sharp_minimum_degree}
    \delta_{k-2}(\cch)
    >
    \delta_{k-2}(\ccx_{k,\l}(n))
\end{equation}
contains a Hamiltonian $\l$-cycle.
In particular, if
\begin{equation*}
    \delta_{k-2}(\cch)
    \geq
    \frac{4(k-\l)-1}{4{(k-\l)}^2} \binom{n}{2},
\end{equation*}
then $\cch$ contains a Hamiltonian $\l$-cycle.
\end{theorem}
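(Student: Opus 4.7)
The plan is to employ the extremal versus non-extremal dichotomy that has become standard for sharp Dirac-type thresholds in hypergraphs, following Han and Zhao~\cite{HaZh15b}, who handled the case~$k=3$. Fix a small $\xi>0$ and call $\cch$ \emph{$\xi$-extremal} if there is a set $B\subseteq V$ with $\bigl||B|-\tfrac{2(k-\l)-1}{2(k-\l)}n\bigr|\le\xi n$ that is \emph{almost independent}, i.e.\ $\cch$ contains at most $\xi\binom{n}{k}$ edges inside~$B$. The proof splits according to whether $\cch$ is $\xi$-extremal or not.

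In the non-extremal case, we prove the stronger statement that if $\cch$ is not $\xi$-extremal and merely satisfies
\begin{equation*}
    \delta_{k-2}(\cch)\ge\biggl(\frac{4(k-\l)-1}{4(k-\l)^2}-\eps\biggr)\binom{n}{2}
\end{equation*}
for a suitable $\eps=\eps(\xi,k,\l)>0$, then $\cch$ already contains a Hamiltonian $\l$-cycle. This is done by the absorption method as in the proof of Theorem~\ref{theorem:asymp}: one first constructs a short absorbing $\l$-path $P_{\mathrm{abs}}$ with the property that for any small reservoir $R$ there is an $\l$-path on $V(P_{\mathrm{abs}})\cup R$ with the same ends as $P_{\mathrm{abs}}$; next a weak regularity lemma together with a covering argument produces an almost spanning $\l$-cycle containing~$P_{\mathrm{abs}}$; finally absorption swallows the leftover vertices and closes the cycle. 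Non-extremality is used throughout to supply the extra structure that makes up for the reduction in the degree bound by~$\eps$.

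In the extremal case, we assume $\cch$ is $\xi$-extremal and satisfies~\eqref{eq:sharp_minimum_degree}, and build the Hamiltonian cycle explicitly. Setting $A=V\setminus B$, we have $|A|$ very close to $n/(2(k-\l))$ and $B$ almost independent. Any Hamiltonian $\l$-cycle must contain $n/(k-\l)=2|A|+O(\xi n)$ edges, each vertex of~$A$ being usable in at most two of them; this is exactly why the model $\ccx_{k,\l}(n)$ fails, with either one edge (when $n/(k-\l)$ is odd) or two edges overlapping in at most $\l$ vertices (when $n/(k-\l)$ is even) being forced to lie entirely inside~$B$. The strict inequality~\eqref{eq:sharp_minimum_degree} guarantees that, after cleaning, sufficiently many $(k-2)$-sets inside~$B$ extend to an edge in~$\cch$ to furnish the missing configuration; for $\l=1$ and $n/(k-\l)$ even, the same inequality additionally lets us replace the obstructing $(\l+1)$-star of $\ccx_{k,\l}(n)$ by a genuine pair of properly overlapping edges. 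The Hamiltonian cycle is then assembled greedily: short $\l$-paths that each carry one ``central'' vertex of~$A$ flanked by fresh vertices of~$B$ are chained into a long $\l$-path covering all of~$A$, and the remaining gap is bridged using the edges inside $B$ just produced.

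The main obstacle will be the extremal case. The absorption machinery of Theorem~\ref{theorem:asymp} is robust enough for any non-extremal input, but to reach the sharp threshold one must genuinely exploit~\eqref{eq:sharp_minimum_degree} and organise the greedy assembly so that the $B$-internal edges are consumed precisely where the construction would otherwise fail. Controlling the interaction of edges of different types (entirely in~$B$, meeting $A$ in one vertex, etc.) with the constraint that consecutive edges overlap in exactly $\l<k/2$ vertices, and doing this simultaneously for both parities of $n/(k-\l)$ and for the special case $\l=1$, is where the bulk of the careful bookkeeping will lie.
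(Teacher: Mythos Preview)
Your plan matches the paper's at the top level: the same extremal/non-extremal dichotomy, with the non-extremal case handled by absorption (the paper simply quotes this as Theorem~\ref{theorem:non-extremal} from~\cite{BaMoScScSc16+}) and the extremal case by an explicit construction exploiting~\eqref{eq:sharp_minimum_degree}. The organisation of the extremal case differs, however. Rather than one long greedy sweep, the paper first sorts vertices by their degree into~$B$ into sets $\Aeps$, $\Beps$ and a small exceptional set~$\Veps$, then builds one \emph{short} $\l$-path~$\ccq$ that (a)~absorbs every vertex of~$\Veps$, (b)~contains precisely the $\Beps$-internal edges forced by~\eqref{eq:sharp_minimum_degree} (found via two dedicated lemmas, split on whether $A\cap\Beps=\emptyset$), and (c)~leaves a perfectly balanced clean partition $\Aast\dcup\Bast$ of the remaining vertices; a black-box lemma of Han and Zhao~\cite{HaZh15} then supplies a Hamiltonian $\l$-path on $\Aast\cup\Bast$ with the same ends as~$\ccq$. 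Your greedy chaining of $A$-centred segments is essentially what that black box does internally, so the two routes agree in content; the paper's decomposition buys modularity and isolates the genuinely new work as the construction of~$\ccq$. What your ``cleaning'' step must accomplish---and which your sketch leaves vague---is exactly this: vertices of~$A$ with low degree into~$B^{(k-1)}$ and vertices of~$B$ with unexpectedly high degree into~$B^{(k-1)}$ cannot be used in the greedy pattern you describe, and absorbing them first while simultaneously hitting the exact $1:(2k-2\l-1)$ size ratio required afterwards is where the paper spends its effort.
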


The following notion of extremality is motivated by the hypergraph $\ccx_{k,\l}(n)$.
A $k$-uniform hypergraph $\cch=(V,E)$ is called \emph{$(\l,\xi)$-extremal} if there exists a partition $V=A\dcup B$ such that $|A|=\left\lceil \frac{n}{2(k-\ell)} - 1 \right\rceil$, $|B|=\left\lfloor \frac{2(k-\l)-1}{2(k-\l)}n + 1 \right\rfloor$ and $e(B)=|E\cap B^{(k)}|\leq \xi \binom{n}{k}$.
We say that $A\dcup B$ is an \emph{$(\l,\xi)$-extremal partition} of $V$.
Theorem~\ref{theorem:main} follows easily from the next two results, the so-called \emph{extremal case} (see Theorem~\ref{theorem:extremal} below) and the \emph{non-extremal case} (see Theorem~\ref{theorem:non-extremal}).

\begin{theorem}[Non-extremal Case]\label{theorem:non-extremal}
For any $0<\xi<1$ and all integers $k\geq 4$ and $1\leq \l<k/2$, there exists $\gamma>0$ such that the following holds for sufficiently large $n$.
Suppose $\cch$ is a $k$-uniform hypergraph on $n$ vertices with $n\in(k-\l)\bbn$ such that $\cch$ is not  $(\l,\xi)$-extremal and
\begin{equation*}
    \delta_{k-2}(\cch)\geq\left(\frac{4(k-\l)-1}{4{(k-\l)}^2}-\gamma\right)\binom{n}{2}.
\end{equation*}
Then $\cch$ contains a Hamiltonian $\l$-cycle.
\qed
\end{theorem}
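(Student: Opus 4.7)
The plan follows the well-established absorption method, adapted so that the non-extremality hypothesis is precisely what upgrades the asymptotic threshold of Theorem~\ref{theorem:asymp} to the sharper one. Fixing constants $0 < \beta \ll \rho \ll \gamma \ll \xi$, I would construct three objects in turn: an \emph{absorbing $\l$-path} $P_A$ on $O(\beta n)$ vertices whose ends $a,b$ are $\l$-sets and which, for any $U \subseteq V \setminus V(P_A)$ of at most $\beta n$ vertices with $|U|$ divisible by $k-\l$, extends to an $\l$-path on $V(P_A) \cup U$ with the same ends $a,b$; a \emph{reservoir} $R$ of size $\rho n$ picked at random so that any two suitable $\l$-sets can be linked by a short $\l$-path inside $R$; and a long \emph{covering $\l$-path} in $\cch[V \setminus (V(P_A) \cup R)]$ that misses only $\beta n/2$ vertices. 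The reservoir stitches the covering path to $P_A$, and then $P_A$ absorbs the uncovered vertices (together with the unused part of~$R$), producing the desired Hamiltonian $\l$-cycle.

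The reservoir and the almost-covering step are relatively standard. For the reservoir, a random set of size $\rho n$ has the required linking property with high probability, since the minimum $(k-2)$-degree guarantees that any $\l$-set has linearly many feasible continuations and a Chernoff/deletion argument supplies the connecting $\l$-paths inside $R$. For the almost-cover one applies Theorem~\ref{theorem:asymp} to the sub-hypergraph $\cch' = \cch[V \setminus (V(P_A) \cup R)]$: the removed set has size $O((\beta+\rho)n)$, so $\delta_{k-2}(\cch')$ drops only by $O((\beta+\rho)n^2)$, and with the slack furnished by~$\gamma$ one lands above the asymptotic threshold of Theorem~\ref{theorem:asymp}, after a mild adjustment of the order by swallowing a bounded number of extra vertices into~$P_A$ to restore divisibility by $k-\l$.

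The core difficulty is the \emph{absorbing lemma}: one must show that every $(k-\l)$-set $T$ has many local absorbers, i.e., $\l$-paths $Q$ on a bounded number of vertices such that some reordering of $V(Q) \cup T$ also forms an $\l$-path with the same ends as~$Q$. Once one has polynomially many (in $n$) absorbers per target~$T$, the usual random-selection argument of R\"odl, Ruci\'nski, and Szemer\'edi combines them into a single path~$P_A$. The point where non-extremality enters is exactly this counting: with the weakened threshold $\bigl(\tfrac{4(k-\l)-1}{4(k-\l)^2} - \gamma\bigr)\binom{n}{2}$ there is no slack to spare in the neighbourhood bounds, and for some target sets~$T$ the naive count of absorbers may fail. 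The plan is to show that such a failure forces a robust bipartition $V = A \dcup B$ meeting the thresholds of $(\l,\xi)$-extremality, contradicting the hypothesis. Concretely, I would identify the set $A$ of vertices whose restricted $(k-2)$-co-degree is too small to serve in a canonical absorber template, and use the edge constraints of that template to force $|A| \approx n/(2(k-\l))$ together with near-emptiness of $\cch[V \setminus A]$ in $k$-edges --- exactly the structure ruled out by non-extremality. This stability-style reduction is the main obstacle; once it is in place, the remaining steps (reservoir, almost-cover, connection, absorption) are standard and complete the proof.
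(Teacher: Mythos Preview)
The paper does not prove this theorem: it is quoted verbatim from the companion paper~\cite{BaMoScScSc16+} and closed with a \qed, so there is no argument here to compare against. The absorption framework you sketch is indeed the method used in that companion work.

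Your outline, however, has a genuine gap in the almost-covering step. You propose to obtain the long $\l$-path by applying Theorem~\ref{theorem:asymp} to $\cch' = \cch[V \setminus (V(P_A) \cup R)]$, claiming that ``the slack furnished by~$\gamma$'' places $\delta_{k-2}(\cch')$ above the asymptotic threshold. But $\gamma$ is a \emph{deficit}, not slack: the hypothesis here is
\[
    \delta_{k-2}(\cch) \geq \left(\frac{4(k-\l)-1}{4(k-\l)^2} - \gamma\right)\binom{n}{2},
\]
which already sits \emph{below} the threshold required by Theorem~\ref{theorem:asymp}, and deleting $O((\beta+\rho)n)$ vertices only lowers the relative minimum degree further. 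Theorem~\ref{theorem:asymp} demands being strictly above the threshold by a positive margin, so it cannot be invoked on $\cch'$; were it applicable, the non-extremality hypothesis would be unnecessary and Theorem~\ref{theorem:non-extremal} would be an immediate corollary of Theorem~\ref{theorem:asymp}. In~\cite{BaMoScScSc16+} the almost spanning cycle is built by an independent argument (weak hypergraph regularity together with a path-tiling lemma in the reduced hypergraph), and the non-extremality assumption is exploited there as well, not solely in the absorber count. Your stability reduction for the absorbing lemma is headed in the right direction, but the covering step needs its own proof rather than an appeal to Theorem~\ref{theorem:asymp}.
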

The non-extremal case was the main result of~\cite{BaMoScScSc16+}.
\begin{theorem}[Extremal Case]\label{theorem:extremal}
For any integers $k\geq 3$ and $1\leq \l<k/2$, there exists $\xi>0$ such that the following holds for sufficiently large $n$.
Suppose $\cch$ is a $k$-uniform hypergraph on $n$ vertices with $n\in(k-\l)\bbn$ such that $\cch$ is $(\l,\xi)$-extremal and
\begin{equation*}
    \delta_{k-2}(\cch)
    >
    \delta_{k-2}(\ccx_{k,\l}).
\end{equation*}
Then $\cch$ contains a Hamiltonian $\l$-cycle.
\end{theorem}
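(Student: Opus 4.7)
The approach is to exploit the extremal structure $V=A\dcup B$ given by hypothesis and to use the strict surplus $\delta_{k-2}(\cch) > \delta_{k-2}(\ccx_{k,\l}(n))$ to defeat the very obstruction that kills a Hamiltonian $\l$-cycle in $\ccx_{k,\l}(n)$. Set $m=n/(k-\l)$, the number of edges of any Hamiltonian $\l$-cycle on $n$ vertices. The canonical structure to aim for places an $A$-vertex in every second overlap of the cycle, so that each edge contains exactly one $A$-vertex and adjacent overlaps alternate between ``$A$-pivoted'' and ``$B$-pivoted''. The extremal value of $|A|$ is exactly one (respectively two) short of what this canonical structure requires, and we compensate with one (respectively two) edges entirely inside $B$, which $\cch$ possesses thanks to the strict inequality.

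First I would clean up the $(\l,\xi)$-extremal partition by exchanging a bounded number of vertices between $A$ and $B$ so that, in the refined partition, each $v\in A$ lies in nearly all possible $(k-1)$-extensions, each $v\in B$ lies in very few edges contained inside $B$, and $|A|$ and $|B|$ are unchanged up to $O(1)$. A direct computation gives $\delta_{k-2}(\ccx_{k,\l}(n)) = \binom{n-k+2}{2} - \binom{|B|-k+2}{2} + \eps^\star$ with $\eps^\star\in\{0,1\}$ accounting for the $(\l+1)$-star in the even-$m$ case; then the strict hypothesis forces every $(k-2)$-subset $S\subseteq B$ to extend to at least one, respectively two, edges whose remaining two vertices lie in $B\setminus S$. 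Crucially, these $B$-only edges are plentiful and do not all lie in a single $(\l+1)$-star, which is the combinatorial surplus distinguishing $\cch$ from $\ccx_{k,\l}(n)$.

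The Hamiltonian $\l$-cycle is then assembled as a cyclic concatenation of short $\l$-path pieces. Fix a cyclic ordering $A=(a_1,\ldots,a_t)$ with $t=|A|$ and partition $V\setminus A$ into blocks $B_1,\ldots,B_t$ of prescribed sizes. Each piece $P_i$ is an $\l$-path on $\{a_i,a_{i+1}\}\cup B_i$ whose two ends are $\l$-sets containing $a_i$ and $a_{i+1}$ respectively; in the ``canonical'' positions $P_i$ has two edges and $2k-\l$ vertices, while in the one or two ``exceptional'' positions $P_i$ has three edges with the middle one entirely inside $B_i$ (which exists by the previous step). Each piece is built greedily: after the clean-up, at each step the high codegree produces many $\l$-set choices for the next overlap inside $B_i$, and at the last step the overlap can be forced to contain $a_{i+1}$. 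Concatenating the $P_i$ cyclically yields the desired Hamiltonian $\l$-cycle.

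The main obstacle is the extreme tightness of the $A$-budget: $t$ is exactly the minimum value for which the canonical structure can hope to work, so every $a_i$ must serve as an $\l$-overlap between two consecutive edges, with no slack. The most delicate point is therefore the placement of the one or two $B$-only exceptional edges: they have to be positioned compatibly with $n\in(k-\l)\bbn$ and with the prescribed block sizes, and chosen so that the greedy construction never gets stuck, especially on the final overlap of each piece where the prescribed $\l$-set must contain $a_{i+1}$. This is where the strict inequality $\delta_{k-2}(\cch)>\delta_{k-2}(\ccx_{k,\l}(n))$ is used essentially; the remaining ingredients (clean-up and case bookkeeping by the parity of $m$) are standard.
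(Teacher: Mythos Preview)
Your high-level strategy matches the paper's: use the strict degree surplus to locate edges inside $B$ that defeat the extremal obstruction, then build a cycle in which almost every edge uses exactly one $A$-vertex. But there is a genuine gap in your clean-up step, and it propagates through the rest of the argument.

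You assert that after exchanging a bounded number of vertices the refined partition has every $v\in A$ of near-complete degree into $B^{(k-1)}$ and every $v\in B$ of very low such degree, with $|A|$ and $|B|$ ``unchanged up to $O(1)$''. This is false: from $e(B)\le\xi\binom{n}{k}$ one can only deduce that the number of misbehaving vertices is at most $\theta n$ for some $\theta$ with $\xi\ll\theta\ll 1$, not $O(1)$. The paper makes this precise by defining $\Aeps$, $\Beps$ (vertices of high resp.\ low degree into $B$) and a residual set $\Veps$, and shows only that $|A\setminus\Aeps|,|B\setminus\Beps|,|\Veps|\le\theta b$. If $q=|A\cap\Beps|>0$, then $q$ vertices you intended to use as pivots actually have low degree into $B$ and must be treated as $B$-vertices; the cycle then needs roughly $2q$ additional $B$-only edges, not one or two, and $q$ may be linear in $n$. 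Your argument gives no mechanism for finding that many disjoint edges inside $B$. The paper supplies one in Lemma~\ref{lem:2q-path}: when $q>0$ one has $B\subset\Beps$, so the degree condition yields $d(M,\Beps^{(2)})\ge q(b-k+2)$ for every $(k-2)$-set $M\subset\Beps$, enough to extract $2q+2$ disjoint short paths each containing a $\Beps$-only edge. The residual vertices in $\Veps$ and the symmetric imbalance when $|B\setminus\Beps|>0$ likewise require separate treatment (Lemma~\ref{lemma:pathV0} and repeated use of Corollary~\ref{corr:connect-extend-typical}\,\ref{it:extend-typical}). Your one-or-two-edge argument is only the special case $B=\Beps$, which the paper isolates as Lemma~\ref{lem:one-or-two-edges}.

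A secondary issue: with the blocks $B_i$ fixed in advance at size $2k-\l-2$, there is no room for ``many choices'' in your greedy step --- each piece has exactly the vertices it must use, so you need specific edges to exist, not merely a positive density of them. The paper avoids this by first building a short path $\ccq$ that absorbs all irregular vertices and balances the two sides exactly (Lemma~\ref{lem:mainlemma}), and then invoking Han--Zhao's Lemma~\ref{lem:3.10}, which produces a Hamiltonian $\l$-path on any clean partition $\Aast\dcup\Bast$ with $|\Bast|=(2k-2\l-1)|\Aast|+\l$ satisfying the degree conditions~\ref{it:2-mainlemma}--\ref{it:4-mainlemma}. Your greedy construction would have to reprove that lemma, with the blocks chosen on the fly rather than in advance.
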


In Section~\ref{sec:overview} we give an overview of the proof of Theorem~\ref{theorem:extremal} and state Lemma~\ref{lem:mainlemma}, the main result required for the proof.
In Section~\ref{sec:mainproof} we first prove some auxiliary lemmas and then we prove Lemma~\ref{lem:mainlemma}.

\section{Overview}\label{sec:overview}

Let $\cch=(V,E)$ be a $k$-uniform hypergraph and let $X,Y\subset V$ be disjoint subsets.
Given a vertex set $L\subset V$ we denote by $d(L,X^{(i)} Y^{(j)})$ the number of edges of the form $L \cup I \cup J$, where $I \in X^{(i)}$, $J \in Y^{(j)}$, and $|L| + i + j = k$.
We allow for $Y^{(j)}$ to be omitted when $j$ is zero and write $d(v,X^{(i)} Y^{(j)})$ for $d(\{ v \},X^{(i)} Y^{(j)})$.

The proof of Theorem~\ref{theorem:extremal} follows ideas from~\cite{HaZh15}, where a corresponding result with a $(k-1)$-degree condition is proved.
Let $\cch=(V,E)$ be an extremal hypergraph satisfying~\eqref{eq:sharp_minimum_degree}.
We first construct an $\l$-path~$\ccq$ in $\cch$ (see~Lemma~\ref{lem:mainlemma} below) with ends $L_0$ and $L_1$ such that there is a partition $\Aast\dcup \Bast$ of $(V\setminus \ccq) \cup L_0 \cup L_1$ composed only of ``typical'' vertices (see~\ref{it:2-mainlemma} and~\ref{it:3-mainlemma} below).
The set $\Aast\cup \Bast$ is suitable for an application of Lemma~\ref{lem:3.10} below, which ensures the existence of an $\l$-path $\ccq'$ on $\Aast\cup \Bast$ with $L_0$ and $L_1$ as ends.
Note that the existence of a Hamiltonian $\l$-cycle in $\cch$ is guaranteed by $\ccq$ and $\ccq'$.
So, in order to prove Theorem~\ref{theorem:extremal}, we only need to prove the following lemma.

\begin{lemma}[Main Lemma]\label{lem:mainlemma}
    For any $\rho > 0$ and all integers $k\geq 3$ and $1\leq \l <k/2$, there exists a positive~$\xi$ such that the following holds for sufficiently large~$n\in (k-\l)\bbn$.
    Suppose that~$\cch=(V,E)$ is an $(\l,\xi)$-extremal $k$-uniform hypergraph on~$n$ vertices and
    \[
        \delta_{k-2}(\cch)
        >
        \delta_{k-2}(\ccx_{k,\l}(n)).
    \]
    Then there exists a non-empty $\l$-path $\ccq$ in $\cch$ with ends $L_0$ and $L_1$ and a partition $\Aast\dcup \Bast=(V\setminus \ccq) \cup L_0 \cup L_1$ where $L_0, L_1\subset \Bast$ such that the following hold:
    \begin{enumerate}[label=\rmlabel]
        \item\label{it:1-mainlemma} $|\Bast|=(2k-2\l-1)|\Aast|+\l$,
        \item\label{it:2-mainlemma} $d(v,\Bast^{(k-1)})\geq (1-\rho) \binom{|\Bast|}{k-1}$ for any vertex $v\in \Aast$,
        \item\label{it:3-mainlemma} $d(v,\Aast^{(1)}\Bast^{(k-2)})\geq (1-\rho) |\Aast| \binom{|\Bast|}{k-2}$ for any vertex $v\in \Bast$,
        \item\label{it:4-mainlemma} $d(L_0,\Aast^{(1)} \Bast^{(k - \l - 1)}), d(L_1,\Aast^{(1)}\Bast^{(k - \l - 1)}) \geq (1-\rho)|\Aast|\binom{|\Bast|}{k - \l - 1}$.
    \end{enumerate}
\end{lemma}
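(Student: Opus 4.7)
The strategy is to start from an $(\l,\xi)$-extremal partition $V = A_0 \dcup B_0$ of $\cch$, clean it so that almost every vertex has near-maximum co-degree into the expected side, and then splice a collection of short $\l$-path gadgets into one short $\l$-path $\ccq$ which absorbs every atypical vertex, realises the cardinality identity~\ref{it:1-mainlemma}, and has ends with the required mixed co-degree.

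\textbf{Cleaning the partition.} Call $v\in A_0$ \emph{typical} if $d(v,B_0^{(k-1)})\ge(1-\rho/2)\binom{|B_0|}{k-1}$, and $v\in B_0$ \emph{typical} if $d(v,A_0^{(1)}B_0^{(k-2)})\ge(1-\rho/2)|A_0|\binom{|B_0|}{k-2}$. Since $e(B_0)\le\xi\binom{n}{k}$, a double-count of non-edges bounds the number of atypical vertices in $A_0$ by $O(\sqrt{\xi}\,n)$; the analogous count, combined with the $(k-2)$-degree hypothesis, gives the same bound in $B_0$. Swapping any remaining vertex whose same-side co-degree dominates its cross-side co-degree (which can only happen for atypical vertices) yields a refined stable partition $A_1 \dcup B_1$ in which $|A_1|, |B_1|$ differ from their $\ccx_{k,\l}(n)$-values by at most $O(\sqrt{\xi}\,n)$, and the number of atypical vertices is at most $\eta n$ for any prescribed $\eta>0$ once $\xi$ is small.

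\textbf{Constructing $\ccq$.} We splice together two kinds of short $\l$-path gadgets through common $\l$-element junctions. \emph{Absorbers} are constant-length $\l$-paths covering one atypical vertex together with otherwise typical vertices; they exist because the $(k-2)$-degree assumption forces enough edges through every vertex, and the overwhelming typicality of the rest of the hypergraph lets a greedy extension succeed. \emph{Balancers} are constant-length $\l$-paths through typical vertices only, available in a short menu of distinct $(A_1,B_1)$-profiles — realised by picking the number of $A_1$-vertices per edge from a small set of feasible values — which exist because typical $\l$-tuples have near-maximum co-degree in both $A_1^{(1)} B_1^{(k-\l-1)}$ and $B_1^{(k-\l)}$. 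One first strings together all absorbers, then appends enough balancers of suitably chosen profile to force $|B_1\setminus V(\ccq)|+|L_0|+|L_1|=(2k-2\l-1)|A_1\setminus V(\ccq)|+\l$, which gives~\ref{it:1-mainlemma} with $\Aast := A_1\setminus V(\ccq)$ and $\Bast := (B_1\setminus V(\ccq))\cup L_0\cup L_1$. The ends $L_0, L_1$ are chosen inside $B_1$ to consist of typical vertices only, so that~\ref{it:4-mainlemma} follows by the same averaging bound used in the cleaning step, and~\ref{it:2-mainlemma}-\ref{it:3-mainlemma} follow because removing $|V(\ccq)|=o(n)$ vertices from either side shifts the co-degrees of typical vertices by only $o\bigl(n^{k-1}\bigr)$.

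\textbf{Main obstacle.} The principal difficulty is carrying out the splicing so that the three goals — covering every atypical vertex, meeting the exact identity~\ref{it:1-mainlemma}, and keeping $|V(\ccq)|=o(n)$ — are met simultaneously. This forces the menu of balancer profiles to be rich enough to realise any residual $A_1$/$B_1$-imbalance modulo the gadget sizes. The strict inequality $\delta_{k-2}(\cch)>\delta_{k-2}(\ccx_{k,\l}(n))$ enters precisely at this point: in the boundary case where $n$ is an even multiple of $k-\l$ and $\l=1$, the $(\l+1)$-star of $\ccx_{k,\l}(n)$ saturates the $(k-2)$-degree bound, and the single extra edge guaranteed by strictness is exactly what supplies the otherwise missing balancer profile needed to achieve~\ref{it:1-mainlemma}.
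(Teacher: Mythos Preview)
Your overall architecture --- clean the partition, absorb atypical vertices into short gadgets, then balance --- matches the paper's. The genuine gap is in your balancer step. You assert that balancers with varying $(A_1,B_1)$-profiles exist ``because typical $\l$-tuples have near-maximum co-degree in both $A_1^{(1)} B_1^{(k-\l-1)}$ and $B_1^{(k-\l)}$''. The second claim is false, and in fact is the opposite of what $(\l,\xi)$-extremality means: since $e(B_0)\le\xi\binom{n}{k}$, almost every $\l$-set $L\subset B_1$ satisfies $d(L,B_1^{(k-\l)})=o(n^{k-\l})$. Consequently the balancer profile with zero $A_1$-vertices in an edge is \emph{not} freely available through typical vertices, and this is exactly the profile you need when the cleaned partition has $|B_1|$ too large relative to $|A_1|$ (which occurs whenever some $A$-vertices migrated into $B_\eps$, i.e.\ $q=|A\cap B_\eps|>0$). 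In that regime your menu of balancers cannot reach the identity in~\ref{it:1-mainlemma}.

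The paper handles precisely this difficulty with two dedicated lemmas. When $q>0$, one shows (using the strict degree bound) that every $(k-2)$-set in $B_\eps$ lies in at least $q(b-k+2)$ edges of $B_\eps$, and from this extracts $2q+2$ disjoint short paths each containing an edge entirely inside $B_\eps$; these are the missing ``$B$-only'' balancers. When $q=0$ and $B=B_\eps$, a separate argument locates one or two $B$-edges (or a size-two $\l$-path in $B$) depending on the parity of $n/(k-\l)$; here the strict inequality is used for all $\l$, not only $\l=1$, and for $\l>1$ in the even case the argument even appeals to a Frankl--F\"uredi forbidden-intersection bound. Your final paragraph therefore both understates where strictness enters and misses that the existence of $B$-only edges requires real work rather than typicality.
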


The next result, which we will use to conclude the proof of Theorem~\ref{theorem:extremal}, was obtained by Han and Zhao (see~\cite{HaZh15}*{Lemma~3.10}).

\begin{lemma}\label{lem:3.10}
    For any integers $k\geq 3$ and $1\leq \l < k/2$ there exists $\rho > 0$ such that the following holds.
    If $\cch$ is a sufficiently large $k$-uniform hypergraph with a partition $V(\cch)=\Aast\dcup \Bast$ and there exist two disjoint $\l$-sets $L_0,L_1\subset \Bast$ such that~\ref{it:1-mainlemma}--\ref{it:4-mainlemma} hold, then $\cch$ contains a Hamiltonian $\l$-path $\ccq'$ with $L_0$ and $L_1$ as ends.
    \qed
\end{lemma}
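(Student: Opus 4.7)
Starting from the $(\l,\xi)$-extremal partition $V = A \dcup B$ guaranteed by the hypothesis, the plan is to identify a small set $A_0 \cup B_0 \subset V$ of \emph{atypical} vertices and construct a short $\l$-path $\ccq$ that absorbs them as interior vertices and whose ends $L_0, L_1 \subset B$ are preselected $\l$-sets of near-maximal codegree. Setting $\Aast := A \setminus V(\ccq)$ and $\Bast := (B \setminus V(\ccq)) \cup L_0 \cup L_1$, conditions~\ref{it:2-mainlemma}--\ref{it:4-mainlemma} will follow automatically from the typicality of vertices left in $\Aast \cup \Bast$, while condition~\ref{it:1-mainlemma} determines the length and $A$/$B$-composition of $\ccq$.

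\textbf{Step 1 (atypical vertices and end candidates).} Fix $\eta \ll \rho$ and set
\[
    A_0 := \{v \in A : d(v, B^{(k-1)}) < (1-\eta)\tbinom{|B|}{k-1}\},
    \quad
    B_0 := \{v \in B : d(v, A^{(1)}B^{(k-2)}) < (1-\eta)|A|\tbinom{|B|}{k-2}\}.
\]
Since $\sum_{v \in B}d(v, B^{(k-1)}) = k\,e(B) \leq k\xi\binom{n}{k}$, Markov leaves all but $O(\sqrt{\xi}\,n)$ vertices $v \in B$ with $d(v, B^{(k-1)}) = o(\binom{|B|}{k-1})$. Combined with the averaged $1$-degree bound $d_1(v) \geq (1-o(1))\frac{4(k-\l)-1}{4(k-\l)^2}\binom{n-1}{k-1}$ inherited from the $\delta_{k-2}$-hypothesis (via averaging $\delta_{k-2}$ over all $(k-2)$-supersets of $v$), such $v$ must spend most of their $1$-degree on $A^{(1)}B^{(k-2)}$-type edges, giving $|B_0| \leq \xi_1 n$ for some $\xi_1 = \xi_1(\xi, \eta) \to 0$ as $\xi \to 0$. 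A dual double counting on links of $v \in A$ yields $|A_0| \leq \xi_1 n$. Finally, pick disjoint $\l$-sets $L_0, L_1 \subset B \setminus B_0$ with $d(L_i, A^{(1)}B^{(k-\l-1)}) \geq (1-\eta/2)|A|\binom{|B|}{k-\l-1}$; such $\l$-sets exist in abundance because the average of this codegree over $L \in (B \setminus B_0)^{(\l)}$ equals $\binom{k-1}{\l}e_1/\binom{|B \setminus B_0|}{\l}$, and the bound on $|B_0|$ simultaneously forces $e_1 \geq (1-o(1))|A|\binom{|B|}{k-1}$.

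\textbf{Step 2 (length and construction of $\ccq$).} An $\l$-path with $p$ edges has $p(k-\l)+\l$ vertices, and direct bookkeeping using $|A| = \lceil n/(2(k-\l)) - 1\rceil$ shows that~\ref{it:1-mainlemma} (together with $\Aast, \Bast$ as above) forces $p$ to have the same parity as $n/(k-\l)$, with
\[
    |V(\ccq)\cap A| = \lfloor(p-1)/2\rfloor
    \qand
    |V(\ccq)\cap B| = p(k-\l)+\l - \lfloor(p-1)/2\rfloor.
\]
Choose the smallest such $p$ satisfying $\lfloor(p-1)/2\rfloor \geq |A_0|$ and $|V(\ccq)\cap B|-2\l \geq |B_0|$; then $p = O(\xi_1 n) \ll n$. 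Build $\ccq$ greedily from $L_0$: at each step, the current $\l$-tail $L$ is an $\l$-set of typical $B$-vertices, so both $d(L, A^{(1)}B^{(k-\l-1)}) \geq (1-o(1))|A|\binom{|B|}{k-\l-1}$ and, for any typical $a \in A \setminus A_0$, $d(L \cup \{a\}, B^{(k-\l-1)}) \geq (1-o(1))\binom{|B|}{k-\l-1}$ hold, so an extension edge of whichever $A$/$B$-composition the bookkeeping prescribes is available. The occasional step requiring an edge entirely inside $B$ is accommodated because the strict excess $\delta_{k-2}(\cch) > \delta_{k-2}(\ccx_{k,\l}(n))$ forces, for every typical $(k-2)$-set $S \subset B$, at least one edge of $\cch$ extending $S$ with both new vertices in $B$. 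At the final step, force the terminal $\l$-tail to coincide with the preselected $L_1$, again via the near-maximal codegree of $L_1$.

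\textbf{Verification and main obstacle.} Condition~\ref{it:1-mainlemma} holds by Step~2. For~\ref{it:2-mainlemma}, any $v \in \Aast \subset A \setminus A_0$ satisfies
\[
    d(v, \Bast^{(k-1)}) \geq d(v, B^{(k-1)}) - |V(\ccq)|\tbinom{|B|-1}{k-2} \geq (1-\eta-o(1))\tbinom{|B|}{k-1} \geq (1-\rho)\tbinom{|\Bast|}{k-1}
\]
for $n$ large; \ref{it:3-mainlemma} follows symmetrically from $\Bast \subset (B \setminus B_0) \cup L_0 \cup L_1$, and~\ref{it:4-mainlemma} is inherited from Step~1's codegree bound on $L_0, L_1$ after the same $o(1)$-perturbation. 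The principal difficulty is Step~2's greedy construction, which must simultaneously absorb $A_0 \cup B_0$, realise the exact $A$/$B$-composition forced by~\ref{it:1-mainlemma}, and land its final $\l$-tail on the preselected $L_1$; the strict gap $\delta_{k-2}(\cch) > \delta_{k-2}(\ccx_{k,\l}(n))$ is exactly the slack needed to find a suitable extending edge at every greedy step, including those requiring an edge entirely inside $B$.
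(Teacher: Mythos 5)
The statement you were asked to prove, Lemma~\ref{lem:3.10}, is not proved in this paper at all: the authors cite it directly from Han and Zhao (\cite{HaZh15}, Lemma~3.10), and the \(\qed\) symbol at the end of the statement signals that it is used as a black box. There is therefore no in-paper proof to compare against, but I can still assess whether your argument establishes the claimed statement, and it does not.

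Your proposal is aimed at a different lemma. Lemma~\ref{lem:3.10} takes as hypothesis a hypergraph \(\cch\) together with a \emph{given} partition \(V(\cch) = \Aast \dcup \Bast\) and \emph{given} disjoint \(\l\)-sets \(L_0, L_1 \subset \Bast\) already satisfying properties~\ref{it:1-mainlemma}--\ref{it:4-mainlemma}, and its conclusion is that \(\cch\) contains a \emph{Hamiltonian} \(\l\)-path (covering all of \(V(\cch)\)) with ends \(L_0\) and \(L_1\). Your write-up instead begins ``starting from the \((\l,\xi)\)-extremal partition \(V = A \dcup B\) guaranteed by the hypothesis'' --- but extremality is not a hypothesis of Lemma~\ref{lem:3.10}; it is a hypothesis of Lemma~\ref{lem:mainlemma}. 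You then preselect \(L_0, L_1\), identify atypical vertex sets \(A_0, B_0\), build a \emph{short} \(\l\)-path \(\ccq\) of length \(O(\xi_1 n) \ll n\) that absorbs them, define \(\Aast := A \setminus V(\ccq)\) and \(\Bast := (B\setminus V(\ccq)) \cup L_0 \cup L_1\), and verify~\ref{it:1-mainlemma}--\ref{it:4-mainlemma} for this new partition. That is precisely the content of Lemma~\ref{lem:mainlemma} (the Main Lemma), not Lemma~\ref{lem:3.10}: you are \emph{producing} the inputs to Lemma~\ref{lem:3.10}, not proving it.

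Concretely, the gap is that nothing in your argument addresses how to cover \emph{all} of \(\Aast \cup \Bast\) by a single \(\l\)-path with the prescribed ends once~\ref{it:1-mainlemma}--\ref{it:4-mainlemma} are in hand. A naive greedy extension from \(L_0\) cannot work on its own: one must place exactly one \(\Aast\)-vertex in each pair of consecutive edges (to exhaust \(\Aast\) exactly when \(\Bast\) is exhausted, as~\ref{it:1-mainlemma} dictates), maintain typical \(\l\)-tails throughout, and at the end make the terminal \(\l\)-set equal the specific prescribed \(L_1\), all while using up the last few remaining vertices with no freedom left. This requires an absorbing-type or careful matching argument, which is exactly what Han and Zhao's Lemma~3.10 supplies and what this paper deliberately omits. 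As written, your proposal neither reproduces that argument nor substitutes a different one, so Lemma~\ref{lem:3.10} remains unproved.
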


\section{Proof of the Main Lemma}\label{sec:mainproof}

We will start this section by describing the setup for the proof, which will be fixed for the rest of the paper.
Then we will prove some auxiliary lemmas and finally prove Lemma~\ref{lem:mainlemma}.
Let $\rho > 0$ and integers $k\geq 3$ and $1\leq \l<k/2$ be given.
Fix constants
\[
    \frac{1}{k}, \frac{1}{\l}, \rho \gg \delta \gg \eps \gg \epsprime \gg \theta \gg \xi.
\]
Let $n\in (k-\l)\bbn$ be sufficiently large and let $\cch$ be an $(\l,\xi)$-extremal $k$-uniform hypergraph on $n$ vertices that satisfies the $(k-2)$-degree condition
\begin{equation*}
    \delta_{k-2}(\cch)
    >
    \delta_{k-2}(\ccx_{k,\l}(n)).
\end{equation*}
Let $A \dcup B=V(\cch)$ be a minimal extremal partition of~$V(\cch)$, i.e.\ a partition satisfying
\begin{equation}
    \label{eq:partition-sizes}
    a = |A| = \left\lceil \frac{n}{2(k-\l)} \right\rceil - 1, \quad b = |B| = n - a, \qand e(B) \leq \xi \binom{n}{k},
\end{equation}
which minimises $e(B)$.
Recall that the extremal example $\ccx_{k,\l}(n)$ implies
\begin{equation}
    \label{eq:minimumDegree}
    \delta_{k-2}(\cch)
    >
    \binom{a}{2} + a(b - k + 2).
\end{equation}
Since $e(B)\leq \xi\binom{n}{k}$, we expect most vertices $v\in B$ to have low degree $d(v,B^{(k-1)})$ into~$B$.
Also, most $v\in A$ must have high degree $d(v,B^{(k-1)})$ into $B$ such that the degree condition for $(k-2)$-sets in~$B$ can be satisfied.
Thus, we define the sets $\Aeps$ and $\Beps$ to consist of vertices of high respectively low degree into~$B$ by
\begin{align*}
    \Aeps &= \left\lbrace v\in V\colon d(v,B^{(k-1)})\geq (1-\eps)\binom{|B|}{k-1}\right\rbrace,\\
    \Beps &=\left\lbrace v\in V\colon d(v,B^{(k-1)})\leq \eps\binom{|B|}{k-1}\right\rbrace,
\end{align*}
and set $\Veps=V\setminus (\Aeps \cup \Beps )$.
We will write $\aeps = |\Aeps|$, $\beps = |\Beps|$, and $\veps = |\Veps|$.
It follows from these definitions that
\begin{equation}\label{eq:Aeps-Beps-inclusion}
    \text{if } A\cap \Beps \neq \emptyset,
    \quad \text{then} \quad
    B \subset \Beps ,
    \quad \text{while otherwise} \quad
    A \subset \Aeps.
\end{equation}
For the first inclusion, consider a vertex $v \in A \cap \Beps$ and a vertex $w \in B \setminus \Beps$.
Exchanging~$v$ and~$w$ would create a minimal partition with fewer edges in $e(B)$, a contradiction to the minimality of the extremal partition.
The other inclusion is similarly implied by the minimality.

Actually, as we shall show below, the sets $\Aeps$ and $\Beps$ are not too different from $A$ and $B$ respectively:
\begin{equation}\label{eq:Aeps-Beps-sizes}
    |A\setminus \Aeps |, |B\setminus \Beps |, |\Aeps \setminus A|, |\Beps \setminus B|\leq \theta b \qand |\Veps|\leq 2\theta b.
\end{equation}
Note that by the minimum $(k-2)$-degree
\[
    \binom{a}{2}\binom{b}{k-2}+a\binom{b}{k-1}(k-1)
    <
    \binom{b}{k-2}\delta_{k-2}(\cch)
    \leq
    \sum_{S\in B^{(k-2)}} d(S).
\]
Every vertex $v \in |A\setminus \Aeps |$ satisfies $d(v, B^{(k-1)}) < (1 - \eps)\binom{b}{k-1}$, so we have
\begin{align*}
    \sum_{S\in B^{(k-2)}} d(S)
    \leq&
    \binom{a}{2}\binom{b}{k-2}+a\binom{b}{k-1}(k-1) \\
    &+ e(B)\binom{k}{2} - |A \setminus \Aeps| \eps \binom{b}{k-1}(k-1).
\end{align*}
Consequently $|A\setminus \Aeps |\leq \theta b$, as $e(B) < \xi \binom{n}{k}$ and $\xi \ll \theta, \eps$.

Moreover, $|B\setminus \Beps | \leq \theta b$ holds as a high number of vertices in $B \setminus \Beps $ would contradict $e(B) < \xi \binom{b}{k}$.
The other three inequalities~\eqref{eq:Aeps-Beps-sizes} follow from the already shown ones, for example for $|\Aeps \setminus A| < \theta b$ observe that
\[
    \Aeps \setminus A = \Aeps \cap B \subset B \setminus \Beps.
\]
Although the vertices in $\Beps$ were defined by their low degree into $B$, they also have low degree into the set $\Beps$ itself; for any $v \in \Beps$ we get
\begin{align*}
    d(v, \Beps^{(k - 1)})
    &\leq
    d(v, B^{(k - 1)}) + |\Beps\setminus B| \binom{|\Beps| - 1}{k - 2}\\
    &\leq
    \eps \binom{b}{k -1 } + \theta b |\Beps|^{k -1}\\
    &< 
    2 \eps \binom{|\Beps|}{k - 1}.
\end{align*}

Since we are interested in $\l$-paths, the degree of $\l$-tuples in $\Beps$ will be of interest, which motivates the following definition.
An $\l$-set $L\subset \Beps$ is called $\eps$-\emph{typical} if
\[
    d(L,B^{(k - \l)})\leq \eps\binom{|B|}{k-\l}.
\]
If $L$ is not $\eps$-typical, then it is called $\eps$-\emph{atypical}.
Indeed, most $\l$-sets in $\Beps$ are $\eps$-typical; denote by $x$ the number of $\eps$-atypical sets in~$\Beps$.
We have
\begin{equation}\label{eq:num-typical-sets}
    \frac{x\cdot\eps \binom{|B|}{k-\l}}{\binom{k}{\l}} \leq e(B \cup \Beps) \leq \xi \binom{n}{k} + \theta {|B|}^k,
    \quad \text{implying} \quad
    x\leq \epsprime \binom{|\Beps|}{\l}.
\end{equation}

\begin{lemma}\label{lem:typical-degree}
    The following holds for any $\Beps^{(m)}$-set $M$ if $m \leq k-2$.
    \[
        d(M,\Aeps^{(1)} \Beps^{(k - m - 1)}) + \frac{k-m}{2} d(M, \Beps^{(k - m)})
        \geq
        \left(1-\delta\right)|\Aeps|\binom{|\Beps| - m}{k-m-1}.
    \]
    In particular, the following holds for any $\eps$-typical $B^{(\l)}$-set $L$.
    \[
        d(L,\Aeps^{(1)} \Beps^{(k - \l - 1)})\geq (1-2\delta)|\Aeps|\binom{|\Beps|-\l}{k-\l-1}.
    \]
\end{lemma}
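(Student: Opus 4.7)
The plan is to prove the first inequality by summing the minimum $(k-2)$-degree hypothesis \eqref{eq:minimumDegree} over all $(k-2)$-sets of the form $S=M\cup T$ with $T\subseteq \Beps\setminus M$ of size $k-m-2$, and then to deduce the ``in particular'' statement as a short corollary. The key double-counting identity is that in $\sum_T d(M\cup T)$ each edge $e\supset M$ with $R := e\setminus M$ is counted exactly $\binom{|R\cap\Beps|}{k-m-2}$ times; this multiplicity vanishes unless at most two vertices of $R$ lie outside $\Beps$.

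Accordingly, the contributing edges split into six types, classified by the triple $(|R\cap\Aeps|,|R\cap\Beps|,|R\cap\Veps|)$. The two types of interest, $(0,k-m,0)$ and $(1,k-m-1,0)$, appear with weights $\binom{k-m}{2}$ and $k-m-1$ and contribute to $d(M,\Beps^{(k-m)})$ and $d(M,\Aeps^{(1)}\Beps^{(k-m-1)})$ respectively. The four remaining types $(2,k-m-2,0)$, $(0,k-m-1,1)$, $(1,k-m-2,1)$, and $(0,k-m-2,2)$ enter the argument as error terms.

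The heart of the proof lies in absorbing these errors. The type $(2,k-m-2,0)$ contributes at most $\binom{\aeps}{2}\binom{\beps-m}{k-m-2}$ and cancels almost exactly against the $\binom{a}{2}\binom{\beps-m}{k-m-2}$ summand on the right-hand side of \eqref{eq:minimumDegree}; the residual $\bigl|\binom{\aeps}{2}-\binom{a}{2}\bigr|\leq \theta b\cdot\aeps$ is controlled by \eqref{eq:Aeps-Beps-sizes}. The three remaining error types all involve a vertex of $\Veps$, and together they contribute $O(\theta)\cdot(k-m-1)\,\aeps\binom{\beps-m}{k-m-1}$, since $|\Veps|\leq 2\theta b$ by \eqref{eq:Aeps-Beps-sizes} while $b/\aeps$ is bounded by a constant depending on $k$ and $\l$ only (via \eqref{eq:partition-sizes}). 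Combining these estimates with the binomial identity $(\beps-k+2)\binom{\beps-m}{k-m-2}=(k-m-1)\binom{\beps-m}{k-m-1}$ and with $a(b-k+2)\geq(1-O(\theta))\,\aeps(\beps-k+2)$ yields
\[
    (k-m-1)\,d(M,\Aeps^{(1)}\Beps^{(k-m-1)}) + \tbinom{k-m}{2}\,d(M,\Beps^{(k-m)}) \geq (1-\delta)(k-m-1)\,\aeps\binom{\beps-m}{k-m-1},
\]
and dividing by $k-m-1$ gives the first inequality. The main obstacle is precisely the simultaneous bookkeeping of these $O(\theta)$-errors, which succeeds only because of the hierarchy $\theta\ll\delta$.

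The ``in particular'' statement follows by applying the first inequality with $m=\l$ and $M=L$; it then remains to show $\tfrac{k-\l}{2}\,d(L,\Beps^{(k-\l)})\leq \delta\,\aeps\binom{\beps-\l}{k-\l-1}$. Every edge counted by $d(L,\Beps^{(k-\l)})$ either is contained in $L\cup B$ or uses a vertex of $\Beps\setminus B$, so
\[
    d(L,\Beps^{(k-\l)}) \leq d(L,B^{(k-\l)}) + |\Beps\setminus B|\binom{\beps}{k-\l-1} \leq \eps\binom{b}{k-\l} + \theta b\binom{\beps}{k-\l-1}
\]
by $\eps$-typicality of $L$ and \eqref{eq:Aeps-Beps-sizes}. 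Rewriting both summands as multiples of $\binom{\beps-\l}{k-\l-1}$ via standard binomial identities and appealing once more to the hierarchy $\eps,\theta\ll\delta$ together with the bounded ratio $b/\aeps$ yields the desired bound, which completes the proof.
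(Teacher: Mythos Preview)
Your proof is correct and follows essentially the same approach as the paper: both sum the minimum $(k-2)$-degree condition over all $(k-2)$-sets $D\subset\Beps$ containing $M$, translate the two main terms back to $d(M,\Aeps^{(1)}\Beps^{(k-m-1)})$ and $d(M,\Beps^{(k-m)})$ via the same double-counting identities, and absorb the remaining error terms using~\eqref{eq:Aeps-Beps-sizes} and the hierarchy $\theta\ll\delta$. The only cosmetic difference is that you partition the contributing edges into six types $(|R\cap\Aeps|,|R\cap\Beps|,|R\cap\Veps|)$, whereas the paper uses a coarser four-way split of $d(D)$, grouping your types $(2,k-m-2,0)$, $(1,k-m-2,1)$, $(0,k-m-2,2)$ together as $d(D,(\Aeps\cup\Veps)^{(2)})\le\binom{a+3\theta b}{2}$; the resulting estimates and the derivation of the ``in particular'' statement are the same.
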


In the proof of the main lemma we will connect two $\eps$-typical sets only using vertices that are unused so far.
Even more, we want to connect two $\eps$-typical sets using exactly one vertex from $A$.
The following corollary of Lemma~\ref{lem:typical-degree} allows us to do this.

\begin{corollary}\label{corr:connect-extend-typical}
    Let $L$ and $L'$ be two disjoint $\eps$-typical sets in $\Beps$ and $U\subset V$ with~$|U| \leq \eps n$.
    Then the following holds.
    \begin{enumerate}[label=\alabel]
        \item\label{it:connect-typical} There exists an $\l$-path disjoint from $U$ of size two with ends $L$ and $L'$ that contains exactly one vertex from $ \Aeps$.
        \item\label{it:extend-typical} There exist $a \in \Aeps \setminus U$ and a set $(k - \l -1)$-set $C \subset \Beps \setminus U$ such that $L \cup a \cup C$ is an edge in $\cch$ and every $\l$-subset of $C$ is $\eps$-typical.
    \end{enumerate}
\end{corollary}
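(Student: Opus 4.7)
The plan is to prove part~\ref{it:extend-typical} first by a direct counting argument, and then to establish part~\ref{it:connect-typical} by a two-step averaging argument based on Lemma~\ref{lem:typical-degree} applied simultaneously to $L$ and~$L'$.

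For part~\ref{it:extend-typical}, I apply the ``in particular'' clause of Lemma~\ref{lem:typical-degree} to the $\eps$-typical set~$L$ to get at least $(1-2\delta)|\Aeps|\binom{|\Beps|-\l}{k-\l-1}$ pairs $(a,C)$ with $a\in\Aeps$, $C\in\binom{\Beps\setminus L}{k-\l-1}$, and $L\cup\{a\}\cup C\in E$. From this bulk I discard two kinds of bad pairs. First, those in which $\{a\}\cup C$ meets~$U$ account for at most $k\,|U|\,|\Aeps|\binom{|\Beps|}{k-\l-2}=O(\eps)\,|\Aeps|\binom{|\Beps|}{k-\l-1}$, using $|U|\le\eps n$ and $|\Beps|=\Theta(n)$. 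Second, those in which some $\l$-subset of~$C$ is $\eps$-atypical account for at most $\epsprime\binom{|\Beps|}{\l}\,|\Aeps|\binom{|\Beps|}{k-2\l-1}=O(\epsprime)\,|\Aeps|\binom{|\Beps|}{k-\l-1}$ by the elementary identity $\binom{|\Beps|}{\l}\binom{|\Beps|-\l}{k-2\l-1}=\binom{k-\l-1}{\l}\binom{|\Beps|}{k-\l-1}$, combined with~\eqref{eq:num-typical-sets}. Since $\delta\gg\eps\gg\epsprime$, the remaining count is strictly positive and any surviving pair $(a,C)$ serves.

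For part~\ref{it:connect-typical}, I target the $2$-edge path with edges $e_1=L\cup\{a\}\cup S\cup Y_1$ and $e_2=L'\cup\{a\}\cup S\cup Y_2$, whose shared $\l$-set $M=\{a\}\cup S$ contributes the unique $\Aeps$-vertex, where $|S|=\l-1$, $|Y_1|=|Y_2|=k-2\l$, and $S,Y_1,Y_2\subset\Beps$ are pairwise disjoint and disjoint from $L\cup L'\cup\{a\}\cup U$. Applying Lemma~\ref{lem:typical-degree} separately to $L$ and to~$L'$, then averaging the codegrees $d(L\cup\{a\},\Beps^{(k-\l-1)})$ and $d(L'\cup\{a\},\Beps^{(k-\l-1)})$ over $a\in\Aeps$, a Markov-style estimate yields at least $(1-2\sqrt{\delta})|\Aeps|$ vertices~$a$ for which both codegrees exceed $(1-\sqrt{\delta})\binom{|\Beps|-\l}{k-\l-1}$; I fix such an $a\in\Aeps\setminus U$. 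Averaging again, this time over $(\l-1)$-sets $S\subset\Beps\setminus(L\cup L'\cup\{a\}\cup U)$ via
\[
    \sum_{S} d(L\cup\{a\}\cup S,\Beps^{(k-2\l)})
    \;=\;\binom{k-\l-1}{\l-1}\,d(L\cup\{a\},\Beps^{(k-\l-1)})
\]
and its $L'$-analogue, a second Markov step produces an~$S$ with both $f(S):=d(L\cup\{a\}\cup S,\Beps^{(k-2\l)})$ and $f'(S):=d(L'\cup\{a\}\cup S,\Beps^{(k-2\l)})$ at least $(1-\sqrt[4]{\delta})\binom{|\Beps|}{k-2\l}$. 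Any $Y_1$ witnessing $f(S)\ge 1$ excludes at most $(k-2\l)\binom{|\Beps|}{k-2\l-1}=O(|\Beps|^{-1})\binom{|\Beps|}{k-2\l}\ll f'(S)$ of the admissible~$Y_2$'s, so a valid disjoint~$Y_2$ remains.

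The main obstacle is forcing one and the same $\Aeps$-vertex~$a$ to serve both edges at once. A naive attempt---applying part~\ref{it:extend-typical} twice, once to~$L$ and once to~$L'$---gives two potentially distinct vertices $a\ne a'$, and then no $\l$-set overlap between the two edges can still contain only a single $\Aeps$-vertex. The outer averaging over~$a$ above is precisely what synchronises the two ends, and the inner averaging over~$S$ then supplies a middle set $M=\{a\}\cup S$ that extends back into~$\Beps$ from both sides. All remaining bookkeeping---disjointness of $Y_1,Y_2$ from each other and from $L,L',S,\{a\},U$---is routine, since each loss is an $O(\eps)+O(|\Beps|^{-1})$ fraction of a neighbourhood of size $\Theta(\binom{|\Beps|}{k-2\l})$, and the hierarchy $\delta\gg\eps\gg\epsprime\gg\theta\gg\xi$ keeps every threshold comfortably met.
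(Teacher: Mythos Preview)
Your proof is correct and follows essentially the same approach as the paper: for~\ref{it:extend-typical} both you and the paper invoke the second part of Lemma~\ref{lem:typical-degree} and discard the few bad extensions via~\eqref{eq:num-typical-sets}, and for~\ref{it:connect-typical} your two--step Markov averaging (first over $a\in\Aeps$, then over $(\l-1)$-sets $S\subset\Beps$) is precisely a detailed implementation of what the paper compresses into the phrase ``by an averaging argument we obtain two sets $C,C'\in\Aeps^{(1)}\Beps^{(k-\l-1)}$ with $|C\cap C'|=\l$''. The only quibble is a harmless constant slip (you lose $2\sqrt{\delta}|\Aeps|$ bad vertices for each of $L$ and $L'$, so the count of simultaneously good $a$'s is at least $(1-4\sqrt{\delta})|\Aeps|$ rather than $(1-2\sqrt{\delta})|\Aeps|$), which does not affect the argument.
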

\begin{proof}[Proof of Corollary~\ref{corr:connect-extend-typical}]
    For~\ref{it:connect-typical}, the second part of Lemma~\ref{lem:typical-degree} for $L$ and $L'$ implies that they both extend to an edge with at least $(1 - 2\delta)|\Aeps|\binom{|\Beps| - \l}{k - \l - 1}$ sets in $\Aeps^{(1)}\Beps^{(k - \l - 1)}$.
    Only few of those intersect $U$ and by an averaging argument we obtain two sets $C, C' \in \Aeps^{(1)}\Beps^{(k - \l - 1)}$ such that $|C \cap C'| = \l$ and $L \cup C$ as well as $L' \cup C'$ are edges in $\cch$, which yields the required $\l$-path.
    In view of~\eqref{eq:num-typical-sets},~\ref{it:extend-typical} is a trivial consequence of the second part of Lemma~\ref{lem:typical-degree}.
\end{proof}
\begin{proof}[Proof of Lemma~\ref{lem:typical-degree}]
    Let $m \leq k-2$ and let $M \in \Beps^{(m)}$ be an $m$-set.
    We will make use of the following sum over all $(k-2)$-sets $D \subset \Beps$ that contain $M$.
    \begin{equation}\label{eq:sumdeg1}
        \begin{split}
        \sum_{\substack{M \subset D \subset \Beps\\|D| = k - 2}} d(D)
        =
        \sum_{\substack{M \subset D \subset \Beps \\ |D| = k - 2}} \Big(& d(D, \Aeps^{(1)} \Beps^{(1)}) + d(D, {(\Aeps \cup \Veps)}^{(2)}) \\
                                                                        & \qquad + d(D,\Beps^{(2)}) + d(D, \Veps^{(1)} \Beps^{(1)})\Big)
        \end{split}
    \end{equation}
    Note that we can relate the sums $\sum d(D, \Aeps^{(1)}\Beps^{(1)})$ and $\sum d(D,\Beps^{(2)})$ in~\eqref{eq:sumdeg1} to the terms in question as follows.
    \begin{equation}\label{eq:sumdeg2}
    \begin{split}
        d(M, \Aeps^{(1)} \Beps^{(k - m - 1)})
        &=
        \frac{1}{k - m - 1}\sum_{\substack{M \subset D \subset \Beps\\|D| = k - 2}} d(D, \Aeps^{(1)} \Beps^{(1)}),
        \\
        d(M, \Beps^{(k - m)})
        &=
        \frac{1}{\binom{k-m}{2}}\sum_{\substack{M \subset D \subset \Beps\\|D| = k - 2}} d(D, \Beps^{(2)}).
    \end{split}
    \end{equation}
    We will bound some of the terms on the right-hand side of~\eqref{eq:sumdeg1}.
    It directly follows from~\eqref{eq:Aeps-Beps-sizes} that $d(D, {(\Aeps\cup \Veps)}^{(2)})\leq \binom{a+3\theta b}{2}$; moreover, $d(D, \Veps^{(1)} \Beps^{(1)}) \leq 2 \theta b\beps$.
    Using the minimum $(k-2)$-degree condition~\eqref{eq:minimumDegree} we obtain
    \begin{equation*}
        \sum_{\substack{M \subset D \subset \Beps\\|D| = k - 2}} d(D)
        >
        \binom{\beps - m}{k - m - 2}\left(\binom{a}{2} + a(b - k + 2)\right).
    \end{equation*}
    Combining these estimates with~\eqref{eq:sumdeg1}~and~\eqref{eq:sumdeg2} yields
    \begin{align*}
        & d(M, \Aeps^{(1)} \Beps^{(k - m - 1)})
        + \frac{k-m}{2} d(M, \Beps^{(k - m)})
        \\
        & \quad \geq
        \frac{1}{k - m - 1}\binom{\beps - m}{k - m - 2}
        \left(\binom{a}{2} + a(b-k+2) - \binom{a+3\theta b}{2} - 2\theta b\beps \right)
        \\
        & \quad \geq
        \left(1-\delta\right)\aeps\binom{\beps - m}{k - m - 1}.
    \end{align*}
    For the second part of the lemma, note that the definition of $\eps$-typicality and $\eps \ll \delta$ imply that $\frac{k-\l}{2} d(L, \Beps^{(k - \l)})$ is smaller than $\delta \aeps \binom{\beps - \l}{k - \l - 1}$ for any $\eps$-typical $\l$-set $L$, which concludes the proof.
\end{proof}

For Lemma~\ref{lem:mainlemma}, we want to construct an $\l$-path $\ccq$, such that $\Veps \subset V(\ccq)$ and the remaining sets $\Aeps\setminus \ccq$ and $\Beps\setminus \ccq$ have the right relative proportion of vertices, i.e., their sizes are in a ratio of one to $(2k - 2\l - 1)$.
If $|A \cap \Beps| > 0$, then $B \subset \Beps$ (see~\eqref{eq:Aeps-Beps-inclusion}) and so $\ccq$ should cover $\Veps$ and contain the right number of vertices from $\Beps$.
For this, we have to find suitable edges inside $\Beps$, which the following lemma ensures.

\begin{lemma}\label{lem:2q-path}
    Suppose that $q = |A \cap \Beps| > 0$.
    Then there exist $2q + 2$ disjoint paths of size three, each of which contains exactly one vertex from $\Aeps$ and has two $\eps$-typical sets as its ends.
\end{lemma}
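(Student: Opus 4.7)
The plan is to construct the $2q+2$ paths one at a time, each of the form $e_1-e_2-e_3$, where $e_1, e_3 \in \Beps^{(k)}\cap E$ supply the two $\eps$-typical $\l$-sets $L_0, L_1$ at the outer ends, and $e_2$ is the unique edge of the path containing the single $\Aeps$-vertex, glued to $e_1, e_3$ through $\l$-set junctions $I_1 = e_1\cap e_2$ and $I_2 = e_2\cap e_3$.

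The first step is to establish that $\Beps^{(k)}\cap E$ is sufficiently dense. Because $q>0$, the inclusion $B\subset\Beps$ from~\eqref{eq:Aeps-Beps-inclusion} yields $\aeps\leq a-q$ and $\beps\geq b+q$. For every $(k-2)$-set $S\subset\Beps$, the minimum-degree bound~\eqref{eq:minimumDegree} combined with the crude estimates $d(S,\Aeps^{(2)})\leq\binom{a-q}{2}$, $d(S,\Aeps^{(1)}\Beps^{(1)})\leq(a-q)(b+q-k+2)$, and an $O(\theta b^2)$ bound on the $\Veps$-contributions implies $d(S,\Beps^{(2)})\geq \tfrac12 qb$. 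Summing over $S$ yields $e(\Beps^{(k)}\cap E) \geq qb^{k-1}/k!$. An analogous computation applied to $(k-2)$-sets containing a fixed $v\in A\cap\Beps$ gives $d(v,\Beps^{(k-1)})\gg qb^{k-2}$, so every vertex in $A\cap\Beps$ lies in many $\Beps^{(k)}$-edges.

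I proceed greedily: assume $i<2q+2$ paths have already been built and let $U$ be the set of their $O(q)$ vertices. Inside $\Beps^{(k)}\cap E$ I extract $4q+4$ pairwise-disjoint edges avoiding $U$; this is feasible because every vertex of $\Beps$ has degree at most $\eps\binom{b}{k-1}+q\binom{\beps}{k-2}$ in $\Beps^{(k)}$ (using the definition of $\Beps$ and the crude size of $A\cap\Beps$), which under the hierarchy $\eps\ll 1/k$ is small compared with the above lower bound on $e(\Beps^{(k)})$. Pair the chosen edges as $(e_1, e_3)$. Within each pair I select pairwise-disjoint $\l$-subsets $L_0, I_1\subset e_1$ and $L_1, I_2\subset e_3$ with $L_0, L_1$ being $\eps$-typical; this is possible since by~\eqref{eq:num-typical-sets} only an $\epsprime$-fraction of $\l$-subsets of $\Beps$ are atypical, and the hierarchy ensures $\epsprime\ll 1/\binom{k}{\l}$. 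Furthermore, the junction pair $(I_1, I_2)$ can be taken to be a \emph{good pair}, meaning $d(I_1\cup I_2, B^{(k-2\l)})\leq\eps\binom{b}{k-2\l}$; this follows from an analogue of~\eqref{eq:num-typical-sets} for $2\l$-subsets of $\Beps$, showing that only a tiny fraction of pairs violate the bound. A good pair satisfies
\[
  d(I_1\cup I_2,\Beps^{(k-2\l)}) \leq \eps\binom{b}{k-2\l}+q\binom{\beps}{k-2\l-1},
\]
which under the hierarchy is much smaller than $\aeps\binom{\beps-2\l}{k-2\l-1}$. Applying Lemma~\ref{lem:typical-degree} with $M:=I_1\cup I_2$ (of size $2\l\leq k-2$) then produces an $\Aeps$-vertex together with a $(k-2\l-1)$-set in $\Beps$, disjoint from $U\cup(e_1\cup e_3)\setminus M$, whose union with $M$ is the desired middle edge $e_2$.

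The main obstacle is ensuring that the three requirements --- (i)~two disjoint $\eps$-typical $\l$-subsets exist inside each edge of the selected matching, (ii)~the junction pair $(I_1,I_2)$ is good, and (iii)~the construction avoids the previously-used $O(q)$ vertices --- can be satisfied simultaneously. The parameter hierarchy $\xi\ll\theta\ll\epsprime\ll\eps\ll\delta\ll\rho$ is designed so that each ``bad'' event excludes only a vanishing fraction of the available choices, leaving enough room to complete the greedy step.
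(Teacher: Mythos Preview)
Your approach differs structurally from the paper's: you build each path as $e_1\!-\!e_2\!-\!e_3$ with $e_1,e_3\subset\Beps$ and $e_2$ a middle edge supplying the $\Aeps$-vertex, whereas the paper uses a \emph{single} $\Beps$-edge per path and then invokes Corollary~\ref{corr:connect-extend-typical}\,\ref{it:connect-typical} to connect one of its typical $\l$-subsets to a fresh $\eps$-typical set $L'$. Your route is pleasantly symmetric, but it has a real gap.

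The issue is your application of Lemma~\ref{lem:typical-degree} to $M=I_1\cup I_2$. You assert $|M|=2\l\le k-2$, but the standing hypothesis is only $1\le\l<k/2$, which permits $2\l=k-1$ (i.e.\ $k=2\l+1$; for instance $k=3,\l=1$ or $k=5,\l=2$). In that boundary case $|M|=k-1$ and Lemma~\ref{lem:typical-degree} does not apply; more fundamentally, the minimum $(k-2)$-degree condition gives no information about the degree of a \emph{fixed} $(k-1)$-set, so once $I_1$ and $I_2$ have been frozen inside $e_1$ and $e_3$ there is no guarantee whatsoever that $I_1\cup I_2$ extends to an edge with an $\Aeps$-vertex. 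Your middle edge $e_2$ may simply not exist. This is exactly why the paper isolates the case $k=2\l+1$, $k>3$: there it decomposes the chosen $(k-2)$-subset of $e$ as a typical $\l$-set $L$ plus a \emph{good} $(\l-1)$-set $G$, and then works with the $(\l+1)$-set $N=G\cup\{w\}$ (to which Lemma~\ref{lem:typical-degree} \emph{does} apply, since $\l+1\le k-2$) to locate the second edge.

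A secondary point: your claim that each selected $\Beps$-edge contains an $\eps$-typical $\l$-subset does not follow from the global bound~\eqref{eq:num-typical-sets} alone --- a particular edge might have every $\l$-subset atypical. The paper avoids this by restricting the matching to the family $\ccb$ of edges containing a $(k-2)$-set with no atypical $\l$-subset (and no bad $(\l-1)$-subset), and only then counting. You would need the analogous restriction before extracting your matching.
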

\begin{proof}
    We say that an $(\l - 1)$-set $M \subset \Beps$ is $\emph{good}$ if it is a subset of at least $(1 - \sqrt{\epsprime })\beps$ $\eps$-typical sets, otherwise we say that the set is \emph{bad}.
    We will first show that there are $2q+2$ edges in $\Beps$, each containing one $\eps$-typical and one \emph{good} $(\l - 1)$-set.
    Then we will connect pairs of these edges to $\l$-paths of size three.

    Suppose that $q = |A \cap \Beps| > 0$.
    So $B \subset \Beps$ by~\eqref{eq:Aeps-Beps-inclusion} and consequently $|\Beps| = |B| + q$ and $q \leq \theta |B|$.
    It is not hard to see from~\eqref{eq:num-typical-sets} that at most a $\sqrt{\epsprime}$ fraction of the $(\l - 1)$-sets in~$\Beps^{(l-1)}$ are bad.
    Hence, at least
    \[
        \left( 1 - \binom{k - 2}{\l}\epsprime - \binom{k - 2}{\l - 1}\sqrt{\epsprime} \right) \binom{b}{k-2}
    \]
    $(k -2)$-sets in $\Beps$ contain no $\eps$-atypical or bad subset.
    Let $\ccb \subset \Beps^{(k)}$ be the set of edges inside $\Beps$ that contain such a $(k-2)$-set.
    For all $M \in \Beps^{(k - 2)}$, by the minimum degree condition, we have $d(M, \Beps^{(2)}) \geq q(b-k+2) + \binom{q}{2}$ and, with the above, we have
    \begin{align*}
        |\ccb| & \geq \left(1 - \binom{k - 2}{\l}\epsprime - \binom{k - 2}{\l - 1}\sqrt{\epsprime }\right)\binom{b}{k-2}\frac{q(b-k+2)}{\binom{k}{2}} \nonumber \\
               & = \left(1 - \binom{k - 2}{\l}\epsprime - \binom{k - 2}{\l - 1}\sqrt{\epsprime }\right)\binom{b}{k-1}\frac{2q}{k} \geq \frac{q}{k} \binom{b}{k-1}.
    \end{align*}
    On the other hand, for any $v \in \Beps$ we have $d(v, \Beps^{(k - 1)}) < 2 \eps \binom{\beps}{k - 1}$ which implies that any edge in $\ccb$ intersects at most $2k \eps \binom{\beps}{k - 1}$ other edges in $\ccb$.
    So, in view of $\eps \ll \frac{1}{k}$ we may pick a set $\ccb'$ of $2q+2$ disjoint edges in $\ccb$.

    We will connect each of the edges in $\ccb'$ to an $\eps$-typical set.
    Assume we have picked the first $i-1$ desired $\l$-paths, say $\ccp_1, \dots, \ccp_{i-1}$, and denote by $U$ the set of vertices contained in one of the paths or one of the edges in $\ccb'$.
    For the rest of this proof, when we pick vertices and edges, they shall always be disjoint from $U$ and everything chosen before.
    Let $e$ be an edge in $\ccb'$ we have not considered yet and pick an arbitrary $\eps$-typical set $L' \subset \Beps \setminus U$.

    We will first handle the cases that $2\l + 1 < k$ or that $\l=1$, $k=3$.
    In the first case, a $(k-2)$-set that contains no $\eps$-atypical set already contains two disjoint $\eps$-typical sets.
    In the second case, an $\l$-set $\{v\}$ is $\eps$-typical for any vertex $v$ in $\Beps$ by the definition of $\eps$-typicality.
    Hence in both cases $e$ contains two disjoint $\eps$-typical sets, say $L_0$ and $L_1$.
    We can use Corollary~\ref{corr:connect-extend-typical}\,\ref{it:connect-typical}, as $|U| \leq 6kq$, to connect $L_1$ to $L'$ and obtain an $\l$-path $\ccp_i$ of size three that contains one vertex in $\Aeps$ and has $\eps$-typical ends $L_0$ and $L'$.

    So now assume that $2\l + 1 = k$ and $k > 3$, in particular $k - 2 = 2\l - 1$ and we may split the $(k-2)$-set considered in the definition of $\ccb$ into an $\eps$-typical $\l$-set $L$ and a good $(\l-1)$-set $G$.
    Moreover, let $w \in e \setminus (L \cup G)$ be one of the remaining two vertices and set~$N = G \cup {w}$.

    First assume that $d(N, \Aeps^{(1)} \Beps^{(\l)}) \geq \frac{\delta}{3} \aeps\binom{\beps}{\l}$.
    As $\theta \ll \delta$, at most $\frac{\delta}{3} \aeps \binom{b}{\l}$ sets in $\Aeps^{(1)} \Beps^{(\l)}$ intersect $U$.
    So it follows from Lemma~\ref{lem:typical-degree} that there exist $\Aeps^{(1)} \Beps^{(\l)}$-sets $C$, $C'$ such that $N \cup C$ and $L' \cup C'$ are edges, $|C \cap C'| = \l$ and~$|C \cap C' \cap \Aeps|=1$.

    Now assume that $d(N, \Aeps^{(1)} \Beps^{(\l)}) < \frac{\delta}{3} \aeps\binom{\beps}{\l}$.
    As the good set $G$ forms an $\eps$-typical set with most vertices in $\Beps$, there exists $v \in \Beps\setminus U$ such that
    \begin{equation*}
        d(N \cup \{v\}, \Aeps^{(1)} \Beps^{(\l - 1)}) < \delta \aeps\binom{\beps}{\l - 1}
    \end{equation*}
    and $G \cup \{ v \}$ is an $\eps$-typical set.
    Lemma~\ref{lem:typical-degree} implies that
    \begin{align*}
        d(N \cup \{v\}, \Beps^{(\l)})
        &\geq
        \frac{2}{\l} \left((1-\delta) \aeps \binom{\beps-(\ell+1)}{\l - 1} - \delta\aeps\binom{\beps}{\l - 1}\right)\\
        &\geq
        \frac{2}{\l} \left(\frac{1}{2} - 2\delta\right) \aeps \binom{\beps}{\l - 1}\\
        &\geq
        \delta \binom{\beps}{\l}.
    \end{align*}
    So there exists an $\eps$-typical $\l$-set $L^* \subset (\Beps \setminus U)$ such that $N \cup L^* \cup \{ v \}$ is an edge in $\cch$.
    Use Lemma~\ref{corr:connect-extend-typical}\,\ref{it:connect-typical} to connect $L^*$ to $L'$ and obtain an $\l$-path $\ccp_i$ of size three that contains one vertex in $\Aeps$ and has $\eps$-typical ends $G \cup \{ v \}$ and $L'$.
\end{proof}

If the hypergraph we consider is very close to the extremal example then Lemma~\ref{lem:2q-path} does not apply and we will need the following lemma.

\begin{lemma}\label{lem:one-or-two-edges}
    Suppose that $B = \Beps$.
    If $n$ is an odd multiple of $k-\ell$ then there exists a single edge on $\Beps$ containing two $\eps$-typical $\ell$-sets.
    If $n$ is an even multiple of $k-\ell$ then there either exist two disjoint edges on $\Beps$ each containing two $\eps$-typical $\ell$-sets or an $\ell$-path of size two with $\eps$-typical ends.
\end{lemma}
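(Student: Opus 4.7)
The strategy is to combine the $(k-2)$-degree hypothesis with the scarcity of $\eps$-atypical $\l$-subsets of $B$ afforded by~\eqref{eq:num-typical-sets}. In both parity cases the minimum of $d(D)$ over $D\in B^{(k-2)}$ in the extremal example is attained by $(k-2)$-sets with small intersection with the $(\l+1)$-star (so that the star contributes nothing), yielding the uniform formula $\delta_{k-2}(\ccx_{k,\l}(n)) = \binom{a}{2} + a(b-k+2)$. Writing $d(D) = d(D,A^{(2)}) + d(D,A^{(1)}B^{(1)}) + d(D,B^{(2)})$ and using the trivial bounds $d(D,A^{(2)}) \leq \binom{a}{2}$ and $d(D,A^{(1)}B^{(1)}) \leq a(b-k+2)$, the strict degree inequality in the hypothesis forces $d(D,B^{(2)}) \geq 1$ for every $D \in B^{(k-2)}$. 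Summation yields $e(\Beps) \geq \binom{b}{k-2}/\binom{k}{2}$, an edge count of order $b^{k-2}$.

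By~\eqref{eq:num-typical-sets}, at most $\epsprime\binom{b}{\l}$ of the $\l$-subsets of $B$ are atypical, so the vast majority of $(k-2)$-sets of $B$ are \emph{clean}, meaning every $\l$-subset is $\eps$-typical. For any clean $D$ and any edge $e = D\cup\{u,v\}$ of $\cch$: if $2\l \leq k-2$, then two disjoint $\l$-subsets of $D\subset e$ are already typical, yielding a good edge at once. In the remaining boundary subcase $2\l = k-1$ (the only other possibility since $\l < k/2$), we refine the choice of $D$ by a further averaging argument analogous to the good/bad analysis in the proof of Lemma~\ref{lem:2q-path}: most clean $D$ admit an $(\l-1)$-subset $D_1$ such that $\{w\}\cup D_1$ is typical for all but a $\sqrt{\epsprime}$-fraction of vertices $w\in B\setminus D$. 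Then one of $u,v$ qualifies as such a $w$, and the pair $L_1 = \{w\}\cup D_1$, $L_2 = D\setminus D_1$ furnishes two disjoint typical $\l$-subsets of $e$. This settles the odd case.

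For the even case, the same construction produces $\Omega(b^{k-2})$ good edges. If two of them are vertex-disjoint we are done; otherwise every pair of good edges intersects, and a sunflower-type argument---using the abundance of good edges against the limited intersection patterns of $k$-sets---locates two edges $e_1,e_2\subset \Beps$ (not necessarily both good) with $|e_1\cap e_2| = \l$. Since $|e_i\setminus e_j| = k-\l \geq \l+1$, each private part has room for an $\l$-subset, and the cleanness of the underlying $(k-2)$-sets guarantees that some $\l$-subset in each private part is typical, giving the desired $\l$-path of size two with $\eps$-typical ends. I expect the main obstacle to lie in this last step: forcing the intersection size of some pair to equal exactly $\l$ when no two good edges are vertex-disjoint, which requires carefully using the degree condition to rule out sunflowers whose core has size strictly larger than $\l$. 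The subcase $2\l=k-1$ in the good-edge construction is delicate but routine by averaging.
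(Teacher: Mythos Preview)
Your overall strategy is in the right spirit, but it diverges from the paper's approach in a way that creates real gaps. The paper begins with a dichotomy you never exploit: if \emph{any} $\eps$-atypical $\l$-set $L\subset\Beps$ exists, then $d(L,B^{(k-\l)})>\eps\binom{b}{k-\l}$, and since atypical sets are scarce (\eqref{eq:num-typical-sets}), two disjoint $(k-\l)$-extensions of $L$ can be chosen whose $\l$-subsets are typical, immediately yielding an $\l$-path of size two with typical ends. Only after this reduction does the paper assume that \emph{all} $\l$-sets in $\Beps$ are typical, so the ``good edge'' requirement becomes vacuous---any edge in $\Beps$ works. Your plan instead tries to locate good edges while atypical sets may still be present, which forces the awkward $2\l=k-1$ averaging step. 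There your claim ``then one of $u,v$ qualifies as such a $w$'' is not justified: $u,v$ are two fixed vertices determined by the single guaranteed edge through $D$, and nothing prevents both from lying in the $\sqrt{\epsprime}$-fraction of bad completions for $D_1$.

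The more serious gap is in the even case. Your ``sunflower-type argument'' is not a proof: the sunflower lemma produces a sunflower with \emph{some} core, not a core of size exactly $\l$, and a pairwise-intersecting $k$-uniform family can have $\Theta(b^{k-1})$ members (a star), far more than your $\Omega(b^{k-2})$ good edges, without any constraint on intersection sizes. The paper handles this by splitting on $\l$. For $\l=1$ the extremal construction $\ccx_{k,1}(n)$ carries a $2$-star on $B$, so $\delta_{k-2}(\ccx_{k,1}(n))=\binom{a}{2}+a(b-k+2)+1$ (your uniform formula is off by one here), and the degree hypothesis gives $d(D,B^{(2)})\geq 2$ rather than $1$; this extra edge is exactly what lets a short direct argument succeed. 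For $\l\geq 2$ the paper fixes an edge $e$, observes that all other edges meet $e$, pigeonholes to a vertex $v\in e$ lying in $\Omega(\binom{b}{k-2})$ edges, and then invokes the Frankl--F\"uredi forbidden-intersection theorem in the $(k-1)$-uniform link of $v$ to force two link edges intersecting in exactly $\l-1$ vertices. Without that theorem (or an equivalent extremal result) your plan has no mechanism to hit intersection size exactly $\l$. Finally, even if you did find $e_1,e_2$ with $|e_1\cap e_2|=\l$, you still need typical $\l$-sets in the private parts $e_i\setminus e_j$; since you allow these edges to be ``not necessarily good'', cleanness of their $(k-2)$-subsets is not established. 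The paper's dichotomy sidesteps this entirely, because by that point every $\l$-set is typical.
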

\begin{proof}
    For the proof of this lemma all vertices and edges we consider will always be completely contained in $\Beps$.
    First assume that there exists an $\eps$-atypical $\ell$-set $L$.
    Recall that this means that $d(L,B^{(k - \l)}) > \eps\binom{|B|}{k-\l}$ so in view of~\eqref{eq:num-typical-sets} and $\epsprime \ll \eps$ we can find two disjoint $(k-\l)$-sets extending it to an edge, each containing an $\eps$-typical set, which would prove the lemma.

    So we may assume that all $\ell$-sets in $\Beps^{(\l)}$ are $\eps$-typical.
    We infer from the minimum degree condition that $\Beps$ contains a single edge, which proves the lemma in the case that $n$ is an odd multiple of $k-\l$ and for the rest of the proof we assume that $n$ is an even multiple of $k-\l$.

    Assume for a moment that $\l = 1$.
    Recall that in this case any $(k-2)$-set in $B$ in the extremal hypegraph $\ccx_{k,\l}(n)$ is contained in one edge.
    Consequently, the minimum degree condition implies that any $(k-2)$-set in $\Beps$ extends to at least two edges on $\Beps$.
    Fix some edge $e$ in $\Beps$; any other edge on $\Beps$ has to intersect $e$ in at least two vertices or the lemma would hold.
    Consider any pair of disjoint $(k-2)$-sets $K$ and $M$ in $\Beps \setminus e$ to see that of the four edges they extend to, there is a pair which is either disjoint or intersect in one vertex, proving the lemma for the case $\l=1$.

    Now assume that $\l > 1$.
    In this case the minimum degree condition implies that any $(k-2)$-set in $\Beps$ extends to at least one edge on $\Beps$.
    Again, fix some edge $e$ in $\Beps$; any other edge on $\Beps$ has to intersect $e$ in at least one vertex or the lemma would hold.
    Applying the minimum degree condition to all $(k-2)$-sets disjoint from $e$ implies that one vertex $v \in e$ is contained in at least $\frac{1}{2k^2} \binom{|\Beps|}{k-2}$ edges.
    We now consider the $(k-1)$-uniform link hypergraph of $v$ on $\Beps$.
    Since any two edges intersecting in $\l-1$ vertices would finish the proof of the lemma, we may assume that there are no such pair of edges.
    However, a result of Frankl and Füredi~\cite[Theorem 2.2]{FranklFuredi} guarantees that this $(k-1)$-uniform hypergraph without an intersection of size $\l-1$ contains at most $\binom{|\Beps|}{k-\l-1}$ edges, a contradiction.
\end{proof}

The following lemma will allow us to handle the vertices in $\Veps$.

\begin{lemma}\label{lemma:pathV0}
    Let $U \subset \Beps$ with $|U| \leq 4k\theta$.
    There exists a family $\ccp_1, \ldots, \ccp_{\veps}$ of disjoint $\l$-paths of size two, each of which is disjoint from $U$ such that for all $i \in [\veps]$
    \[
        |V(\ccp_i) \cap \Veps| = 1 \qand |V(\ccp_i) \cap \Beps| = 2k - \l - 1,
    \]
    and both ends of $\ccp_i$ are $\eps$-typical sets.
\end{lemma}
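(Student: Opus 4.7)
The plan is to build the paths greedily, one per vertex $v \in \Veps$. Order $\Veps$ as $v_1, \dots, v_{\veps}$, and at step $i$ set $U_i := U \cup \bigcup_{j<i} V(\ccp_j)$, so that $|U_i| \leq |U| + (i - 1)(2k - \l) = O(\theta b)$ at every step. For each $v := v_i$, the construction places $v$ in the intersection $L$ of the two edges of $\ccp_i$; both edges then pass through $v$, which lets us exploit the fact that $v \in \Veps$ implies $d(v, B^{(k-1)}) > \eps\binom{b}{k-1}$. After discarding the $O(\theta b^{k-1})$ co-neighbours that meet $B \setminus \Beps$ or $U_i$, this gives
\[
    d(v, (\Beps\setminus U_i)^{(k-1)}) \;\geq\; \tfrac{\eps}{2}\binom{b}{k-1}.
\]

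For $\l = 1$ the intersection is $L = \{v\}$ and one simply picks two vertex-disjoint $(k-1)$-sets $T_1, T_2 \subset \Beps\setminus U_i$ with $\{v\}\cup T_j \in E$ by a trivial matching argument in the link of $v$; each vertex of $\Beps$ is automatically an $\eps$-typical $1$-set and serves as an end. For $\l \ge 2$ the intersection has the form $L = \{v\}\cup L'$ with $L' \in \Beps^{(\l-1)}$, and the crux is locating a good $L'$. Averaging the identity
\[
    \sum_{L' \in B^{(\l-1)}} d(\{v\}\cup L', B^{(k-\l)}) \;=\; \binom{k-1}{\l-1}\, d(v, B^{(k-1)})
\]
and restricting to $L' \subset \Beps\setminus U_i$ (which costs an $O(\theta/\eps)$ fraction, negligible since $\theta \ll \eps$) yields some $(\l-1)$-set $L'$ with $d(\{v\}\cup L', (\Beps\setminus(U_i\cup L'\cup\{v\}))^{(k-\l)}) = \Omega(\eps)\binom{b}{k-\l}$. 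Two disjoint $(k-\l)$-sets $T_1', T_2'$ extending $\{v\}\cup L'$ to edges are then obtained by a standard matching argument in the $(k-\l)$-uniform link of $\{v\}\cup L'$.

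It remains to guarantee $\eps$-typical ends: I would choose $L_0$ as an $\l$-subset of $e_1\setminus e_2$ and $L_1$ as an $\l$-subset of $e_2\setminus e_1$, both $\eps$-typical in $\Beps$. By~\eqref{eq:num-typical-sets}, the number of $(k-\l)$-sets in $\Beps$ whose every $\l$-subset is $\eps$-atypical is at most $\epsprime\binom{k-\l}{\l}\binom{\beps}{k-\l}$, an $O(\epsprime) = o(\eps)$ fraction of all $(k-\l)$-sets; since the $T_j$ (respectively $T_j'$) are selected from a family of size $\Omega(\eps b^{k-\l})$, one can avoid this small bad set while still picking a disjoint pair, each containing an $\eps$-typical $\l$-subset to use as an end. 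The main obstacle I anticipate is the averaging step for $\l \ge 2$: one has to verify that restricting $L'$ to $\Beps\setminus U_i$ and $T_j'$ to avoid $U_i\cup L'\cup\{v\}$ preserves the co-degree lower bound, but since $|U_i|$ and $|B\triangle \Beps|$ are both $O(\theta b)$ and $\theta \ll \eps$ this causes only a constant-factor loss.
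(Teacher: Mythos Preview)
Your proposal is correct and follows essentially the same greedy strategy as the paper: place $v_i$ in the $\l$-vertex intersection of the two edges, use $d(v_i,B^{(k-1)}) > \eps\binom{b}{k-1}$ to get $\Omega(\eps)\binom{\beps}{k-1}$ usable $(k-1)$-sets in $\Beps$ after discarding those meeting $U_i$ or containing an $\eps$-atypical $\l$-subset, and then extract two that overlap in exactly $\l-1$ vertices. The paper compresses your averaging step for $\l\ge 2$ into the single sentence ``so we may pick two edges $e$ and $f$ \dots\ that contain $v_i$ and intersect in $\l$ vertices''; your explicit averaging over $L'\in\Beps^{(\l-1)}$ is precisely the natural way to justify that line, so the arguments coincide. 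One small wording slip: where you write ``$(k-\l)$-sets whose every $\l$-subset is $\eps$-atypical'' you mean (and your bound $\epsprime\binom{k-\l}{\l}\binom{\beps}{k-\l}$ actually counts) those containing \emph{some} $\eps$-atypical $\l$-subset; avoiding that family is exactly what guarantees the ends can be chosen $\eps$-typical.
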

\begin{proof}
    Let $\Veps = \{ x_1, \dots, x_{\veps} \}$.
    We will iteratively pick the paths.
    Assume we have already chosen $\l$-paths $\ccp_1, \dots, \ccp_{i-1}$ containing the vertices $v_1, \dots, v_{i-1}$ and satisfying the lemma.
    Let $U'$ be the set of all vertices in $U$ or in one of those $\l$-paths.
    From $v_i \notin \Beps$ we get
    \[
        d(v_i, \Beps^{(k-1)}) \geq d(v_i, B) - |B \setminus \Beps| \cdot \binom{|B|}{k-2} \geq \frac{\eps}{2} \binom{\beps}{k-1}.
    \]
    From~\eqref{eq:num-typical-sets} we get that at most $k^\l \epsprime \binom{\beps}{k-1}$ sets in $\Beps^{(k-1)}$ contain at least one $\eps$-atypical $\l$-set.
    Also, less than~$\frac{\eps}{8} \binom{\beps}{k-1}$ sets in $\Beps^{(k-1)}$ contain one of the vertices of $U'$.
    In total, at least $\frac{\eps}{4} \binom{\beps}{k-1}$ of the $\Beps^{(k-1)}$-sets form an edge with $v_i$.
    So we may pick two edges $e$ and $f$ in $\Veps^{(1)} \Beps^{(k-1)}$ that contain the vertex $v_i$ and intersect in $\l$ vertices.
    In particular, these edges form an $\l$-path of size two as required by the lemma.
\end{proof}

We can now proceed with the proof of Lemma~\ref{lem:mainlemma}.
Recall that we want to prove the existence of an $\l$-path $\ccq$ in $\cch$ with ends $L_0$ and $L_1$ and a partition
\[
    \Aast\dcup \Bast=(V\setminus \ccq)\dcup L_0\dcup L_1
\]
satisfying properties~\ref{it:1-mainlemma}--\ref{it:4-mainlemma} of Lemma~\ref{lem:mainlemma}.
Set $q = |A \cap \Beps|$.
We will split the construction of the $\l$-path $\ccq$ into two cases, depending on whether $q=0$ or not.

First, suppose that $q > 0$.
In the following, we denote by $U$ the set of vertices of all edges and $\l$-paths chosen so far.
Note that we will always have $|U| \leq 20 k \theta n$ and hence we will be in position to apply Corollary~\ref{corr:connect-extend-typical}.
We use Lemma~\ref{lem:2q-path} to obtain paths $\ccq_1, \ldots, \ccq_{2q+2}$ and then we apply Lemma~\ref{lemma:pathV0} to obtain $\l$-paths $\ccp_1, \ldots, \ccp_{\veps}$.
Every path $\ccq_i$, for $i \in [2q+2]$, contains $3k - 2\l - 1$ vertices from $\Beps$ and one from $\Aeps$, while every $\ccp_j$, for $j \in [\veps]$, contains $2k - \l - 1$ from $\Beps$ and one from $\Veps$.

As the ends of all these paths are $\eps$-typical, we apply Corollary~\ref{corr:connect-extend-typical}\,\ref{it:connect-typical} repeatedly to connect them to one $\l$-path $\ccp$.
In each of the $\veps + 2q + 1$ steps of connecting two $\l$-paths, we used one vertex from $\Aeps$ and $2k - 3\l - 1$ vertices from $\Beps$.
Overall, we have that
\[
    |V(\ccp) \cap \Aeps| = \veps + 4q + 3,
\]
as well as
\[
    |V(\ccp) \cap \Beps| = (4k - 4\l - 2)\veps + (5k - 5\l - 2)(2q+2) - (2k - 3\l - 1).
\]
Furthermore $|V(\ccp)| \leq 10 k \theta b$.

Using the identities $\aeps + \beps + \veps = n$ and $\aeps + q + \veps = a$, we will now establish property~\ref{it:1-mainlemma} of Lemma~\ref{lem:mainlemma}.
Set $s(\ccp) = (2k - 2\l - 1)|\Aeps\setminus V(\ccp)| - |\Beps\setminus V(\ccp)| - 2\l$, so
\begin{align*}
    s(\ccp) & = (2k - 2\l - 1)|\Aeps \setminus V(\ccp)| - |\Beps \setminus V(\ccp)| - 2\l\\
            & = (2k - 2\l - 1)(\aeps - (\veps + 4q + 3)) - \beps \\
            & \phantom{{} = {}} {} + (4k - 4\l -2)\veps + (5k - 5\l - 2)(2q+2) - (2k - 3\l - 1) - 2\l \\
            & = (2k - 2\l - 1)\aeps - \beps + (2k - 2\l - 1)\veps + 2(k - \l)q + 2k - 3\l \\
            & = 2(k - \l)(\aeps + \veps + q + 1) - n - \l \\
            & = 2(k - \l)(a + 1) - n - \l.
\end{align*}
If $n/(k - \l)$ is even, $s(\ccp) = -\l$ (see~\eqref{eq:partition-sizes}) and we set $\ccq = \ccp$.
Otherwise $s(\ccp) = k - 2\l$ and we use Corollary~\ref{corr:connect-extend-typical}\,\ref{it:extend-typical} to append one edge to $\ccp$ to obtain $\ccq$.
It is easy to see that one application of Corollary~\ref{corr:connect-extend-typical}\,\ref{it:extend-typical} decreases $s(\ccp)$ by $k - \l$.
Setting $\Aast = \Aeps\setminus V(\ccq)$ and $\Bast = (\Beps \setminus V(\ccq)) \cup L_0 \cup L_1$ we get from $s(\ccq) = -\l$ that $\Aast$ and $\Bast$ satisfy~\ref{it:1-mainlemma}.

Now, suppose that $q = 0$.
Apply Lemma~\ref{lemma:pathV0} to obtain $\l$-paths $\ccp_1, \ldots, \ccp_{\veps}$.
If $B=\Beps$, apply Lemma~\ref{lem:one-or-two-edges} to obtain one or two more $\l$-paths contained in $\Beps$.
We apply Corollary~\ref{corr:connect-extend-typical}\,\ref{it:connect-typical} repeatedly to connect them to one $\l$-path $\ccp$.

Since $q = 0$, we have that $\Beps \subset B$ and $\aeps + \veps = |V\setminus \Beps| = a + |B\setminus \Beps|$.
We can assume without loss of generality that $\Veps \neq \emptyset$, otherwise just take $\Veps = \{ v \}$ for an arbitrary $v \in V (\cch)$.
If $B=\Beps$ let $x$ be $2(k-\l)$ or $k-\l$ depending on whether $n$ is an odd or even multiple of $k-\l$; otherwise let $x=0$.
With similar calculations as before and the same definition of $s(\ccp)$ we get that
\[
    s(\ccp) = 2(k-\l)a  + x +  2(k-\l)|B \setminus \Beps| - n - \l  \equiv -\l \mod(k - \l).
\]
Extend the $\l$-path $\ccp$ to an $\l$-path $\ccq$ by adding $\frac{s(\ccp) + \l}{k - l}$ edges using Corollary~\ref{corr:connect-extend-typical}\,\ref{it:extend-typical}.
Thus $s(\ccq) = -\l$, and we get~\ref{it:1-mainlemma} as in the previous case.

In both cases, we will now use the properties of the constructed $\l$-path $\ccq$ to show~\ref{it:2-mainlemma}-\ref{it:4-mainlemma}.
We will use that $v(\ccq) \leq 20 k \theta b$, which follows from the construction.
Since $\Aast \subset \Aeps$, for all $v \in \Aast$ we have $d(v, B^{(k-1)}) \geq (1 - \eps)B^{(k-1)}$.
Thus
\[
    d(v, \Bast^{(k-1)}) \geq d(v, B^{(k-1)}) - |\Bast\setminus B|\binom{|\Bast| - 1}{k-2} \geq (1 - 2\eps)\binom{|\Bast|}{k-1},
\]
which shows~\ref{it:2-mainlemma}.

For~\ref{it:3-mainlemma}, Lemma~\ref{lem:typical-degree} yields for all vertices $v \in \Bast \subset \Beps$ that
\[
    d(v,\Aeps^{(1)} \Beps^{(k-2)}) + \frac{k-1}{2} d(v, \Beps^{(k-1)})
    \geq
    \left(1-\delta\right)|\Aeps|\binom{|\Beps| - 1}{k-2}.
\]
The second term on the left can be bounded from above by $2k\eps\binom{\beps}{k-1}$.
So, as $\delta, \eps \ll \rho$ and $\aeps - |\Aast| \ll \rho |\Aast| $ as well as $\beps - |\Bast| \ll \rho |\Bast|$, we can conclude~\ref{it:3-mainlemma}.

By Lemma~\ref{lem:typical-degree}, we know that
\[
    d(L_{0}, \Aeps^{(1)}\Beps^{(k - 1)}),d(L_{1}, \Aeps^{(1)}\Beps^{(k - 1)}) \geq (1 - \delta) \aeps\binom{\beps - \l}{k - \l - 1}.
\]
As $\delta \ll \rho$ and $\aeps - |\Aast| \ll \rho |\Aast| $ as well as $\beps - |\Bast| \ll \rho |\Bast|$, we can conclude~\ref{it:4-mainlemma}.

\begin{bibdiv}
\begin{biblist}

\bib{BaMoScScSc16+}{article}{  author={Bastos, {J. de O.}},
  author={Mota, G. O.},
  author={Schacht, M.},
  author={Schnitzer, J.},
  author={Schulenburg, F.},
  title={Loose Hamiltonian cycles forced by large (k-2)-degree -- approximate version},
  journal={SIAM J. Discrete Math.},
  volume= {31},
  number={4},
  year={2017},
  pages={ 2328--2347},
  doi={10.1137/16M1065732},
}

\bib{BuHaSc13}{article}{   author={Bu{\ss}, Enno},
   author={H{\`a}n, Hi\d{\^e}p},
   author={Schacht, Mathias},
   title={Minimum vertex degree conditions for loose Hamilton cycles in
   3-uniform hypergraphs},
   journal={J. Combin. Theory Ser. B},
   volume={103},
   date={2013},
   number={6},
   pages={658--678},
   issn={0095-8956},
   review={\MR{3127586}},
   doi={10.1016/j.jctb.2013.07.004},
}

\bib{FranklFuredi}{article}{
   author={Frankl, Peter},
   author={F\"uredi, Zolt\'an},
   title={Forbidding just one intersection},
   journal={J. Combin. Theory Ser. A},
   volume={39},
   date={1985},
   number={2},
   pages={160--176},
   issn={0097-3165},
   review={\MR{793269}},
   doi={10.1016/0097-3165(85)90035-4},
}

\bib{HaSc10}{article}{
   author={H{\`a}n, Hi\d{\^e}p},
   author={Schacht, Mathias},
   title={Dirac-type results for loose Hamilton cycles in uniform
   hypergraphs},
   journal={J. Combin. Theory Ser. B},
   volume={100},
   date={2010},
   number={3},
   pages={332--346},
   issn={0095-8956},
   review={\MR{2595675}},
   doi={10.1016/j.jctb.2009.10.002},
}

\bib{HaZh15}{article}{   author={Han, Jie},
   author={Zhao, Yi},
   title={Minimum codegree threshold for Hamilton $\ell$-cycles in $k$-uniform hypergraphs},
   journal={J. Combin. Theory Ser. A},
   volume={132},
   date={2015},
   pages={194--223},
   issn={0097-3165},
   review={\MR{3311344}},
   doi={10.1016/j.jcta.2015.01.004},
}

\bib{HaZh15b}{article}{
   author={Han, Jie},
   author={Zhao, Yi},
   title={Minimum vertex degree threshold for loose Hamilton cycles in
   3-uniform hypergraphs},
   journal={J. Combin. Theory Ser. B},
   volume={114},
   date={2015},
   pages={70--96},
   issn={0095-8956},
   review={\MR{3354291}},
   doi={10.1016/j.jctb.2015.03.007},
}

\bib{KaKi99}{article}{
   author={Katona, Gyula Y.},
   author={Kierstead, H. A.},
   title={Hamiltonian chains in hypergraphs},
   journal={J. Graph Theory},
   volume={30},
   date={1999},
   number={3},
   pages={205--212},
   issn={0364-9024},
   review={\MR{1671170}},
   doi={10.1002/(SICI)1097-0118(199903)30:3<205::AID-JGT5>3.3.CO;2-F},
}

\bib{KeKuMyOs11}{article}{
   author={Keevash, Peter},
   author={K{\"u}hn, Daniela},
   author={Mycroft, Richard},
   author={Osthus, Deryk},
   title={Loose Hamilton cycles in hypergraphs},
   journal={Discrete Math.},
   volume={311},
   date={2011},
   number={7},
   pages={544--559},
   issn={0012-365X},
   review={\MR{2765622}},
   doi={10.1016/j.disc.2010.11.013},
}

\bib{KuMyOs10}{article}{
   author={K{\"u}hn, Daniela},
   author={Mycroft, Richard},
   author={Osthus, Deryk},
   title={Hamilton $\ell$-cycles in uniform hypergraphs},
   journal={J. Combin. Theory Ser. A},
   volume={117},
   date={2010},
   number={7},
   pages={910--927},
   issn={0097-3165},
   review={\MR{2652102}},
   doi={10.1016/j.jcta.2010.02.010},
}

\bib{KuOs06}{article}{
   author={K{\"u}hn, Daniela},
   author={Osthus, Deryk},
   title={Loose Hamilton cycles in 3-uniform hypergraphs of high minimum
   degree},
   journal={J. Combin. Theory Ser. B},
   volume={96},
   date={2006},
   number={6},
   pages={767--821},
   issn={0095-8956},
   review={\MR{2274077}},
   doi={10.1016/j.jctb.2006.02.004},
}

\bib{RRsurv}{article}{
   author={R{\"o}dl, Vojtech},
   author={Ruci{\'n}ski, Andrzej},
   title={Dirac-type questions for hypergraphs---a survey (or more problems
   for Endre to solve)},
   conference={
      title={An irregular mind},
   },
   book={
      series={Bolyai Soc. Math. Stud.},
      volume={21},
      publisher={J\'anos Bolyai Math. Soc., Budapest},
   },
   date={2010},
   pages={561--590},
   review={\MR{2815614}},
   doi={10.1007/978-3-642-14444-8\_16},
}

\bib{RoRuSz06}{article}{
   author={R{\"o}dl, Vojt{\v{e}}ch},
   author={Ruci{\'n}ski, Andrzej},
   author={Szemer{\'e}di, Endre},
   title={A Dirac-type theorem for 3-uniform hypergraphs},
   journal={Combin. Probab. Comput.},
   volume={15},
   date={2006},
   number={1-2},
   pages={229--251},
   issn={0963-5483},
   review={\MR{2195584}},
   doi={10.1017/S0963548305007042},
}

\bib{RoRuSz08}{article}{
   author={R{\"o}dl, Vojt{\v{e}}ch},
   author={Ruci{\'n}ski, Andrzej},
   author={Szemer{\'e}di, Endre},
   title={An approximate Dirac-type theorem for $k$-uniform hypergraphs},
   journal={Combinatorica},
   volume={28},
   date={2008},
   number={2},
   pages={229--260},
   issn={0209-9683},
   review={\MR{2399020}},
   doi={10.1007/s00493-008-2295-z},
}

\bib{Zhao-survey}{article}{
   author={Zhao, Yi},
   title={Recent advances on Dirac-type problems for hypergraphs},
   conference={
      title={Recent trends in combinatorics},
   },
   book={
      series={IMA Vol. Math. Appl.},
      volume={159},
      publisher={Springer},
   },
   date={2016},
   pages={145--165},
   review={\MR{3526407}},
   doi={10.1007/978-3-319-24298-9\_6},
}

\end{biblist}
\end{bibdiv}

\end{document}